\numberwithin{equation}{section}
\theoremstyle{plain}
\newtheorem{lemma}{Lemma}[section]
\newtheorem{proposition}[lemma]{Proposition}
\newtheorem{theorem}[lemma]{Theorem}
\theoremstyle{definition}
\newtheorem{definition}[lemma]{Definition}
\newtheorem{remark}[lemma]{Remark}
\newtheorem{example}[lemma]{Example}
\newcommand{\R}{{\mathbb R}}
\newcommand{\Z}{{\mathbb Z}} 
\newcommand{\N}{{\mathbb N}}
\newcommand{\Hh}{{\mathcal H}}
\newcommand{\Ll}{{\mathcal L}}
\newcommand{\Om}{{\Omega}}
\newcommand{\om}{{\omega}}
\newcommand{\X}{{\mathfrak X}}
\newcommand{\p}{{\partial}}
\newcommand{\Sp}{{\text{\rm Spin}(7)}}
\renewcommand{\Im}{{ \rm Im \,}}
\renewcommand{\frak}[1]{{\mathfrak {#1}}}
\newcommand{\eps}{{\varepsilon}}
\renewcommand{\a}{{\mathfrak a}}
\newcommand{\m}{{\mathfrak m}}
\begin{document}

\date{\today}
\title[Fr\"olicher-Nijenhuis bracket]
{Fr\"olicher-Nijenhuis bracket on manifolds with special holonomy}

\author{Kotaro Kawai}
\address{Gakushuin University, 1-5-1, Mejiro, Toshima,Tokyo, 171-8588, Japan}
\email{kkawai@math.gakushuin.ac.jp}

\author{H\^ong V\^an L\^e}
\address{Institute of Mathematics, Czech Academy of Sciences, Zitna 25, 11567 Praha 1, Czech Republic}
\email{hvle@math.cas.cz}

\author{Lorenz Schwachh\"ofer}
\address{Fakult\"at f\"ur Mathematik,
TU Dortmund University,
Vogelpothsweg 87, 44221 Dortmund, Germany}
\email{lschwach@math.tu-dortmund.de} 

\thanks{The first named author is supported by 
JSPS KAKENHI Grant Numbers JP17K14181, and 
the research of the second named author was  supported 
by the  GA\v CR  project  18-00496S  and  RVO:67985840. The third named author was supported by the Deutsche Forschungsgemeinschaft by grant SCHW 893/5-1.}

\begin{abstract} 
In this article, we summarize our recent results 
on the study of manifolds with special holonomy via the Fr\"olicher-Nijenhuis bracket. 
This bracket enables 
us to define 
the Fr\"olicher-Nijenhuis cohomologies which are analogues of 
the $d^c$ and the Dolbeault cohomologies in K\"ahler geometry, 
and 
assigns an $L_\infty$-algebra 
to each associative  submanifold. 
We provide   several concrete   computations of the Fr\"olicher-Nijenhuis  cohomology. 

\end{abstract}
\keywords{Special holonomy, calibrated submanifold, Fr\"olicher-Nijenhuis  bracket, 
cohomology invariant, deformation theory.}
\subjclass[2010]{Primary: 53C25, 53C29, 53C38  Secondary: 17B56, 17B66}

\maketitle

\section{Introduction}

The Fr\"olicher-Nijenhuis bracket, 
which was introduced in \cite{FN1956, FN1956b},  
defines a natural structure of a graded Lie algebra 
on the space of tangent bundle valued differential forms
$\Om^\ast(M, TM)$ on a smooth manifold $M$. 

On a Riemannian manifold $(M,g)$, 
if there is a parallel differential form of even degree, 
we can define canonical cohomologies 
which are analogues of 
the $d^c$ and the Dolbeault cohomologies in K\"ahler geometry. 
See Section \ref{subs:parallel}. 
We compute these cohomologies for $G_2$-manifolds in Theorem \ref{thm:hG2} 
and give a sketch of the proof in Section \ref{sec:g2}. 
A similar statement holds for $\Sp$ and Calabi-Yau manifolds. 
See \cite{KLS2,KLS3}. 

In the second part  of our  note, using the  Fr\"olicher-Nijenhuis bracket, we  assign  to each associative 
submanifold  an $L_\infty$-algebra.

{\bf Notation}: 
Let $(V, g)$ be an $n$-dimensional oriented real vector space with a scalar product $g$. 
Define the map $\p$ by contraction of a form with the metric $g$, i.e.
\begin{equation} \label{eq:def-partial}
\p = \p_g: \Lambda^k V^\ast \longrightarrow \Lambda^{k-1} V^\ast \otimes V, \qquad \p_g(\alpha^k) := (\imath_{e_i} \alpha^k) \otimes e^i,
\end{equation}
where $(e_i)$ is an  orthonormal basis of $V$ with the dual basis $(e^i)$.

\section{Preliminaries}\label{sec:pre}

\subsection{Graded Lie algebras and differentials}\label{sec:coh}

We briefly recall some basic notions and properties of graded (Lie) algebras. 
Let $V := (\bigoplus_{k \in \Z} V_k, \cdot)$ be a graded real vector space with a graded bilinear map $\cdot: V \times V \to V$, called a {\em product on $V$}. A {\em graded derivation of $(V, \cdot)$ of degree $l$ }
is a linear map $D^l: V \to V$ of degree $l$ (i.e., $D^l (V_k) \subset V_{k+l}$) such that 
\begin{equation} \label{eq:derivations}
D^l(x \cdot y) = (D^l x) \cdot y + (-1)^{l|x|} x \cdot (D^l y),
\end{equation}
where $|x|$ denotes the degree of an element, i.e. $|x| = k$ for $x \in V_k$.
If we denote by ${\mathcal D}^l(V)$ the graded derivations of $(V, \cdot)$ of degree $l$, then ${\mathcal D}(V) := \bigoplus_{l \in \Z} {\mathcal D}^l(V)$ is a graded Lie algebra with the Lie bracket
\begin{equation} \label{eq:derivations-bracket}
{}[D_1, D_2] := D_1 D_2 - (-1)^{|D_1| |D_2|} D_2 D_1,
\end{equation}
i.e., the Lie bracket is graded anti-symmetric and satisfies the graded Jacobi identity,
\begin{eqnarray}
\label{GLA-skew} && [x, y] = -(-1)^{|x||y|} [y,x]\\
\label{GLA-Jac} && (-1)^{|x||z|} [x, [y,z]] + (-1)^{|y||x|} [y, [z,x]] + (-1)^{|z||y|} [z, [x,y]] = 0.
\end{eqnarray}

In general, if $L = (\bigoplus_{k \in \Z} L_k, [\cdot, \cdot])$ is a graded Lie algebra, then an {\em action of $L$ on $V$ }is a Lie algebra homomorphism $\pi: L \to {\mathcal D}(V)$, which yields a graded bilinear map $L \times V \to V$, $(x, v) \mapsto \pi(x)(v)$ such that the map $\pi(x): V \to V$ is a graded derivation of degree $|x|$ and such that
\[
{}[\pi(x), \pi(y)] = \pi[x,y].
\]
For instance, a graded Lie algebra acts on itself via the adjoint representation $ad: L \to {\mathcal D}(L)$, where $ad_x(y) := [x,y]$.

For a graded Lie algebra $L$ we define the set of {\em Maurer-Cartan elements of $L$ of degree $2k+1$ }as
\[
{\mathcal {MC}}^{2k+1}(L) := \{ \xi \in L_{2k+1} \mid [\xi,\xi] = 0\}.
\]
If $\pi: L \to {\mathcal D}(V)$ is an action of $L$ on $(V, \cdot)$, then for $\xi \in {\mathcal {MC}}^{2k+1}(L)$ we have $0 = [\pi(\xi), \pi(\xi)] = 2 \pi(\xi)^2$, so that $\pi(\xi): V \to V$ is a differential on $V$. We define the {\em cohomology of $(V, \cdot)$ w.r.t. $\xi$ }as
\begin{equation} \label{eq:cohom-GA}
H^i_\xi(V): = 
\frac{\ker (\pi(\xi): V_i \to V_{i+2k+1})}{\Im (\pi(\xi): V_{i-(2k+1)} \to V_i)} \qquad \mbox{for $\xi \in {\mathcal {MC}}^{2k+1}(L)$.}
\end{equation}
Since $\pi(\xi)$ is a derivation, it follows that $\ker \pi(\xi) \cdot \ker \pi(\xi) \subset \ker \pi(\xi)$, whence there is an induced product on $H^\ast_\xi(V) := \bigoplus_{i \in \Z} H^i_\xi(V)$.

If $L = \bigoplus_{k \in \Z} L_k$ is a graded Lie algebra, then for $v \in L_0$ and $t \in \R$, we define the formal power series
\begin{equation} \label{eq:formal-exp}
\exp(tv): L \longrightarrow L[[t]], \qquad \exp(tv)(x) := \sum_{k=0}^\infty \dfrac{t^k}{k!} ad_v^k(x).
\end{equation}
Observe that $ad_\xi(v) = 0$ for some $v \in L_0$ iff $ad_v(\xi) = 0$ iff $\exp(tv)(\xi) = \xi$ for all $t \in \R$. In this case, we call $v$ an {\em infinitesimal stabilizer of $\xi$}.

For $\xi \in {\mathcal {MC}}^{2k+1}(L)$, we say that $x \in L_{2k+1}$ is an {\em infinitesimal deformation of $\xi$ within ${\mathcal {MC}}^{2k+1}(L)$ }if $[\xi + t x, \xi + t x] = 0 \mod t^2$. Evidently, this is equivalent to $[\xi, x] = 0$ or $x \in \ker ad_\xi$. Such an infinitesimal deformation is called {\em trivial} if $x = [\xi, v]$ for some $v \in L_0$, since in this case, $\xi + t x = \exp(-tv)(\xi) \mod t^2$, whence up to second order, it coincides with elements in the orbit of $\xi$ under the (formal) action of $\exp(tv)$.
Thus, we have the following interpretation of some cohomology groups.

\begin{proposition} \label{prop:homology-general}
Let $(L = \bigoplus_{i \in \Z} L_i, [\cdot, \cdot])$ be a real graded Lie algebra, acting on itself by the adjoint representation, and let $\xi \in {\mathcal {MC}}^{2k+1}(L)$. Then the following holds.
\begin{enumerate}
\item If $L_{-(2k+1)} = 0$, then $H_\xi^0(L)$ is the Lie algebra of infinitesimal stabilizers of $\xi$.
\item $H_\xi^{2k+1}(L)$ is the space of infinitesimal deformations of $\xi$ within \linebreak ${\mathcal {MC}}^{2k+1}(L)$ modulo trivial deformations.
\end{enumerate}
\end{proposition}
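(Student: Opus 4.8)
The plan is to prove both statements by directly unwinding the definition of the cohomology groups $H^i_\xi(L)$ from \eqref{eq:cohom-GA}, specialized to the adjoint action $\pi = ad$, and then matching the resulting subquotients with the spaces of stabilizers and deformations. Since $\xi \in L_{2k+1}$, the differential $ad_\xi$ raises degree by $2k+1$, so the relevant portion of the complex reads
\[
L_{-(2k+1)} \xrightarrow{\;ad_\xi\;} L_0 \xrightarrow{\;ad_\xi\;} L_{2k+1} \xrightarrow{\;ad_\xi\;} L_{4k+2}.
\]

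For part (1), I would first observe that the hypothesis $L_{-(2k+1)} = 0$ forces the image defining the denominator of $H^0_\xi(L)$ to vanish, so that $H^0_\xi(L) = \ker(ad_\xi \colon L_0 \to L_{2k+1}) = \{v \in L_0 \mid [\xi, v] = 0\}$. The only point requiring care is matching this with the notion of infinitesimal stabilizer, phrased via $ad_v(\xi) = 0$; here I would invoke graded antisymmetry \eqref{GLA-skew} with $|\xi|$ odd and $|v| = 0$, which gives $[\xi, v] = -[v, \xi]$ and hence the equivalence $ad_\xi(v) = 0 \Leftrightarrow ad_v(\xi) = 0$ recorded just before the statement. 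Finally, to see that $H^0_\xi(L)$ is a Lie subalgebra of $L_0$, I would apply the derivation property \eqref{eq:derivations} of $ad_\xi$ to a bracket $[v, w]$ of two cocycles, which immediately yields $ad_\xi[v, w] = 0$; this is the special case of the induced product on $H^\ast_\xi(L)$ already noted in the text.

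For part (2), I would compute the subquotient $H^{2k+1}_\xi(L)$ and match its numerator and denominator separately. Expanding the Maurer-Cartan condition to first order gives $[\xi + tx, \xi + tx] = [\xi, \xi] + t([\xi, x] + [x, \xi]) + t^2 [x, x]$, and using $[\xi, \xi] = 0$ together with graded antisymmetry for two odd elements (which yields $[x, \xi] = [\xi, x]$), the linear term becomes $2t\,[\xi, x]$. Hence $x$ is an infinitesimal deformation precisely when $[\xi, x] = 0$, identifying the space of infinitesimal deformations with $\ker(ad_\xi \colon L_{2k+1} \to L_{4k+2})$, the numerator of $H^{2k+1}_\xi(L)$. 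The denominator $\Im(ad_\xi \colon L_0 \to L_{2k+1}) = \{[\xi, v] \mid v \in L_0\}$ is exactly the space of trivial deformations, since these were defined as the $x = [\xi,v]$ with $v \in L_0$, so the quotient is as claimed.

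I do not expect a genuine obstacle here: both statements are formal consequences of the definitions, and the whole content lies in tracking signs in the graded brackets. The one place where a wrong sign convention would derail the argument is the first-order expansion in part (2), where it is essential that $\xi$ and $x$ both lie in the odd-degree part $L_{2k+1}$, so that $(-1)^{|\xi||x|} = -1$ and the two cross terms reinforce rather than cancel — which is precisely why Maurer-Cartan elements are taken in odd degree.
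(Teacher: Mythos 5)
Your proposal is correct and follows essentially the same route as the paper, which treats the proposition as an immediate consequence of the preceding discussion: the equivalence $ad_\xi(v)=0 \Leftrightarrow ad_v(\xi)=0$, the first-order expansion of the Maurer--Cartan condition giving $[\xi,x]=0$, and the identification of trivial deformations with $\Im(ad_\xi\colon L_0\to L_{2k+1})$. Your sign bookkeeping (odd degree of $\xi$ making the cross terms reinforce) and the derivation-property argument for the Lie algebra structure on $H^0_\xi(L)$ match what the paper implicitly uses.
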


\subsection{The Fr\"olicher-Nijenhuis bracket}\label{subs:fnb}

We shall apply our discussion from the preceding section to the following example. Let $M$ be a manifold and $(\Om^\ast(M), \wedge) = (\bigoplus_{k \geq 0} \Om^k(M), \wedge)$ be the graded algebra of differential forms. Evidently, the exterior derivative $d: \Om^k(M) \to \Om^{k+1}(M)$ is a derivation of $\Om^\ast(M)$ of degree $1$, whereas insertion $\imath_X: \Om^k(M) \to \Om^{k-1}(M)$ of a vector field $X \in {\frak X}(M)$ is a derivation of degree $-1$.

More generally, for $K \in \Om^k(M, TM)$ we define $\imath_K \alpha^l$ as the {\em insertion of $K$ into $\alpha^l \in \Om^l(M)$ }pointwise by
\[
\imath_{\kappa^k \otimes X} \alpha^l := \kappa^k \wedge (\imath_X \alpha^l) \in \Om^{k+l-1}(M),
\]
where $\kappa^k \in \Om^k(M)$ and $X \in {\frak X}(M)$, and this is a derivation of $\Om^\ast(M)$ of degree $k-1$. Thus, the {\em Nijenhuis-Lie derivative along $K \in \Om^k(M, TM)$ }defined as
\begin{equation} \label{eq:LK-deriv}
\Ll_K (\alpha^l) := [\imath_K, d] (\alpha^l) = \imath_K (d\alpha^l) + (-1)^k d(\imath_K \alpha^l) \in \Om^{k+l}(M)
\end{equation}
is a derivation of $\Om^\ast(M)$ of degree $k$. 

Observe that for $k = 0$ in which case $K \in \Om^0(M, TM)$ is a vector field, both $\imath_K$ and $\Ll_K$ coincide with the standard notion of insertion of and Lie derivative along a vector field.

In \cite{FN1956, FN1956b}, it was shown that $\Om^\ast(M, TM)$ carries a unique structure of a graded Lie algebra, defined by the so-called {\em Fr\"olicher-Nijenhuis bracket},
\[
[\cdot, \cdot]^{FN}:  \Om^k (M, TM) \times \Om^l (M, TM) \to \Om^{k+l} (M, TM)
\]
such that $\Ll$ defines an action of $\Om^\ast(M, TM)$ on $\Om^\ast(M)$, that is,
\begin{equation} \label{eq:FN-homom}
\Ll_{[K_1, K_2]^{FN}} = [\Ll_{K_1}, \Ll_{K_2}] = \Ll_{K_1} \circ \Ll_{K_2} - (-1)^{|K_1||K_2|} \Ll_{K_2} \circ \Ll_{K_1}.
\end{equation}
It is given by the following formula for $\alpha^k \in \Om^k(M)$, $\beta^l \in\Om ^l (M)$, $X_1, X_2 \in \X (M)$ \cite[Theorem 8.7 (6), p. 70]{KMS1993}:
\begin{align}
\nonumber [\alpha^k \otimes  X_1, &\beta^l \otimes  X_2]^{FN} = \alpha^k \wedge \beta^l \otimes [ X_1,  X_2]\nonumber \\
& + \alpha^k \wedge (\Ll_{X_1} \beta^l) \otimes X_2 
- (\Ll_{X_2} \alpha^k) \wedge \beta^l \otimes X_1 \label{eq:kms}\\ 
&+ (-1)^{k} \left( d \alpha^k \wedge (\imath_{X_1} \beta^l) \otimes X_2 
+ (\imath_{X_2} \alpha^k) \wedge d \beta^l \otimes X_1 \right).
\nonumber
\end{align}
In particular, for a vector field $X \in  \X(M)$ and $K \in \Om^\ast(M, TM)$ we have \cite[Theorem 8.16 (5), p. 75]{KMS1993}
\begin{align} \label{eq:liefn}
\Ll_X (K) = [X, K] ^{FN},
\end{align}
that is, the Fr\"olicher-Nijenhuis bracket with a vector field coincides with the Lie derivative of the tensor field $K \in \Om^\ast(M, TM)$. This means that $\exp(tX): \Om^\ast(M, TM) \to \Om^\ast(M, TM)[[t]]$ is the action induced by (local) diffeomorphisms of $M$. Thus, Proposition \ref{prop:homology-general} now immediately implies the following result.

\begin{theorem} \label{thm:FN-cohomology}
Let $M$ be a manifold and $K \in \Om^{2k+1}(M, TM)$ be such that $[K,K]^{FN}= 0$, and define the differential $d_K(K') := [K, K']^{FN}$. Then
\begin{enumerate}
\item $H_K^0(\Om^\ast(M, TM))$ is the Lie algebra of vector fields stabilizing $K$.
\item $H_K^{2k+1}(\Om^\ast(M, TM))$ is the space of infinitesimal deformations of $K$ within the differentials of $\Om^\ast(M, TM)$ of the form $ad_{\xi^{2k+1}}$, modulo (local) diffeomorphisms.
\end{enumerate}
\end{theorem}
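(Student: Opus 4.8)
The plan is to recognize the theorem as the specialization of Proposition \ref{prop:homology-general} to the graded Lie algebra $L = \Om^\ast(M, TM)$ equipped with the Fr\"olicher-Nijenhuis bracket, graded by form degree so that $L_i = \Om^i(M, TM)$. First I would verify the hypotheses. Since differential forms have non-negative degree, $L$ is concentrated in degrees $\geq 0$, so $L_{-(2k+1)} = 0$ holds automatically for every $k \geq 0$; and the assumption $[K,K]^{FN} = 0$ says precisely that $K \in {\mathcal{MC}}^{2k+1}(L)$. The differential $d_K = ad_K$ is then the adjoint action of $K$ on $L$, so that $H_K^i(\Om^\ast(M,TM))$ is exactly the cohomology $H_K^i(L)$ appearing in that proposition, and it remains only to translate its abstract conclusions into geometric language.

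For part (1), Proposition \ref{prop:homology-general}(1) identifies $H_K^0(L)$ with the Lie algebra of infinitesimal stabilizers, that is, those $v \in L_0 = \Om^0(M, TM) = \X(M)$ with $ad_K(v) = [K, v]^{FN} = 0$. The geometric translation uses graded skew-symmetry (\ref{GLA-skew}) together with (\ref{eq:liefn}): since $v$ has degree $0$, one has $[K, v]^{FN} = -[v, K]^{FN} = -\Ll_v K$, so the condition becomes $\Ll_v K = 0$. As (\ref{eq:liefn}) and the remark following it show that $\exp(tv)$ acts on $\Om^\ast(M,TM)$ as the action induced by the (local) flow of $v$, the vanishing $\Ll_v K = 0$ is exactly the statement that this flow preserves $K$. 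Hence the infinitesimal stabilizers are precisely the vector fields stabilizing $K$.

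For part (2), Proposition \ref{prop:homology-general}(2) identifies $H_K^{2k+1}(L)$ with the infinitesimal deformations of $K$ inside ${\mathcal{MC}}^{2k+1}(L)$ modulo trivial deformations, and I would translate the two ingredients separately. On the one hand, any $\xi \in {\mathcal{MC}}^{2k+1}(L)$ makes $ad_\xi$ a differential (as observed in the paragraph containing (\ref{eq:cohom-GA})), so deforming $K$ within ${\mathcal{MC}}^{2k+1}(L)$ is the same as deforming the differential $d_K = ad_K$ inside the family of differentials of the form $ad_{\xi^{2k+1}}$. On the other hand, a trivial deformation is $[K, v]^{FN} = -\Ll_v K$ for some $v \in L_0 = \X(M)$, which by the computation in part (1) is the first-order change of $K$ under the flow of $v$; thus passing to the quotient by trivial deformations is exactly passing to the quotient by the action of (local) diffeomorphisms. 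This yields the stated description of $H_K^{2k+1}(\Om^\ast(M,TM))$.

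The substance of the argument is therefore a dictionary between the abstract graded-Lie-algebra notions and their geometric meaning, and I do not expect any serious computation. The one point that requires care, rather than a mere unwinding of definitions, is the identification of trivial deformations with genuine (local) diffeomorphisms: this rests entirely on the key identity (\ref{eq:liefn}), asserting that the Fr\"olicher-Nijenhuis bracket with a vector field is the Lie derivative, and on the accompanying fact that the formal exponential $\exp(tv)$ integrates to the action of the flow of $v$. Once this is invoked, both parts follow immediately from Proposition \ref{prop:homology-general}.
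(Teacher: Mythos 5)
Your proposal is correct and follows exactly the paper's route: the paper derives this theorem as an immediate consequence of Proposition \ref{prop:homology-general} applied to $L = \Om^\ast(M, TM)$ with the Fr\"olicher-Nijenhuis bracket, using (\ref{eq:liefn}) to identify $[X,K]^{FN}$ with $\Ll_X K$ and $\exp(tX)$ with the action of the flow of $X$. Your write-up merely makes explicit the dictionary that the paper leaves implicit.
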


\section{Fr\"olicher-Nijenhuis cohomology} \label{subs:parallel}

Suppose that $(M,g)$ is an $n$-dimensional Riemannian manifold with Levi-Civita connection $\nabla$, and $\Psi \in \Om^{2k}(M)$ is a parallel form of even degree. 
We now make the following simple but crucial observation.
The proof is given by a straightforward computation in geodesic normal coordinates.

\begin{proposition} \label{prop:FN}
Let $\hat \Psi := \p_g \Psi \in \Om^{2k-1}(M, TM)$ with the contraction map $\p_g$ from (\ref{eq:def-partial}).
Then $\hat \Psi$ is a Maurer-Cartan element, i.e., $[\hat \Psi, \hat \Psi]^{FN} = 0$.
\end{proposition}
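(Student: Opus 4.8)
The plan is to exploit two structural features of the Fr\"olicher-Nijenhuis bracket: it is $\R$-bilinear, and it is a purely local (indeed natural) operator. Consequently $[\hat\Psi,\hat\Psi]^{FN}$ is a well-defined tensor field that may be computed pointwise in any convenient local frame. Fix $p \in M$ and choose a smooth local orthonormal frame $(e_i)$ near $p$ that is \emph{synchronous at} $p$, i.e.\ obtained by parallel transport of an orthonormal basis of $T_pM$ along radial geodesics, so that $\nabla e_i(p) = 0$ and hence, by torsion-freeness, $[e_i,e_j](p) = \nabla_{e_i}e_j(p) - \nabla_{e_j}e_i(p) = 0$. This is the frame-theoretic counterpart of working in geodesic normal coordinates.

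In this frame I would write $\hat\Psi = \sum_i \psi_i \otimes e_i$ with $\psi_i := \imath_{e_i}\Psi \in \Om^{2k-1}(M)$, which agrees with $\p_g\Psi$ from (\ref{eq:def-partial}). By $\R$-bilinearity of the bracket,
\begin{equation*}
[\hat\Psi,\hat\Psi]^{FN} = \sum_{i,j}[\psi_i \otimes e_i,\psi_j \otimes e_j]^{FN},
\end{equation*}
and each summand is expanded using the explicit formula (\ref{eq:kms}) with $\alpha^k = \psi_i$, $X_1 = e_i$, $\beta^l = \psi_j$, $X_2 = e_j$. This produces five families of terms, each carrying exactly one of the factors $[e_i,e_j]$, $\Ll_{e_i}\psi_j$, $\Ll_{e_j}\psi_i$, $d\psi_i$, or $d\psi_j$.

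The heart of the argument is to show that \emph{each of these factors vanishes at} $p$. Since $\Psi$ is parallel it is in particular closed, $d\Psi = \sum_i e^i \wedge \nabla_{e_i}\Psi = 0$, so Cartan's formula gives $d\psi_i = \Ll_{e_i}\Psi - \imath_{e_i}d\Psi = \Ll_{e_i}\Psi$. Writing the Lie derivative through the Levi-Civita connection,
\begin{equation*}
(\Ll_{e_i}\Psi)(Y_1,\dots,Y_{2k}) = (\nabla_{e_i}\Psi)(Y_1,\dots,Y_{2k}) + \sum_a \Psi(Y_1,\dots,\nabla_{Y_a}e_i,\dots,Y_{2k}),
\end{equation*}
both summands vanish at $p$: the first because $\nabla\Psi = 0$ identically, the second because $\nabla e_i(p) = 0$. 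Hence $d\psi_i(p) = \Ll_{e_i}\Psi(p) = 0$. For the Lie-derivative factors I would use $\Ll_{e_i}\imath_{e_j} = \imath_{e_j}\Ll_{e_i} + \imath_{[e_i,e_j]}$ to obtain $\Ll_{e_i}\psi_j = \imath_{e_j}(\Ll_{e_i}\Psi) + \imath_{[e_i,e_j]}\Psi$, which vanishes at $p$ since $\Ll_{e_i}\Psi(p) = 0$ and $[e_i,e_j](p) = 0$. Finally $[e_i,e_j](p) = 0$ by the choice of frame. Thus all five families vanish at $p$, so $[\hat\Psi,\hat\Psi]^{FN}(p) = 0$; as $p$ was arbitrary, $[\hat\Psi,\hat\Psi]^{FN} = 0$.

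The only point requiring care — and the main (mild) obstacle — is the non-tensorial character of the bracket: one must not factor metric coefficients out of $[\cdot,\cdot]^{FN}$, which is exactly why I insist on a genuine orthonormal frame, so that $\hat\Psi = \sum_i \psi_i \otimes e_i$ has no function coefficients, rather than the coordinate frame of normal coordinates (where $\hat\Psi = g^{ij}(\imath_{\partial_i}\Psi)\otimes\partial_j$ and one would additionally have to track the derivatives of $g^{ij}$, even though these too vanish at $p$). Once the frame is chosen synchronously, the remainder is the straightforward term-by-term evaluation indicated above.
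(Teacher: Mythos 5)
Your proof is correct and follows essentially the route the paper takes: the paper's proof is exactly the ``straightforward computation in geodesic normal coordinates'' that you carry out, and your choice of a synchronous orthonormal frame (so that $\hat\Psi=\sum_i\psi_i\otimes e_i$ has no function coefficients and $\nabla e_i(p)=0$, $[e_i,e_j](p)=0$) is just a clean way of organizing that same pointwise computation. All five families of terms produced by (\ref{eq:kms}) do vanish at $p$ for the reasons you give, and your remark about not factoring smooth functions out of the non-$C^\infty$-bilinear bracket correctly identifies the one point requiring care.
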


Thus, by the discussion in Section \ref{sec:coh}, the Lie derivative $\Ll_{\hat \Psi}$ and the adjoint map $ad_{\hat \Psi}$ are differentials on $\Om^\ast(M)$ and $\Om^\ast(M, TM)$, respectively, and for simplicity, we shall denote these by
\[
\Ll_\Psi: \Om^\ast(M) \longrightarrow \Om^\ast(M), \qquad 
ad_\Psi: \Om^\ast(M, TM) \longrightarrow \Om^\ast(M, TM),
\]
or, if we wish to specify the degree,
\begin{equation*} 
\Ll_{\Psi; l}: \Om^{l-2k+1}(M)  \longrightarrow  \Om^l(M), \qquad
ad_{\Psi; l}: \Om^{l-2k+1}(M, TM)  \longrightarrow  \Om^l(M, TM).
\end{equation*}
The cohomology algebras we denote by $H_\Psi^\ast(M)$ and $H_\Psi^\ast(TM)$ instead of $H_{\hat \Psi}^\ast(\Om^\ast(M))$ and $H_{\hat \Psi}^\ast(\Om^\ast(M, TM))$, respectively. That is, 
the $i$-th cohomologies are defined as
\begin{equation} \label{eq:def-cohom}
\begin{array}{rcl}H^i_{\Psi}(M) & := & \dfrac{\ker \Ll_{\Psi}: \Om^i(M) \to \Om^{i+2k-1}(M)}{\Im \Ll_{\Psi}: \Om^{i-2k+1}(M) \to \Om^i(M)},\\
[5mm] H^i_{\Psi}(M, TM) 
& := & 
\dfrac{\ker ad_{\Psi}: \Om^i(M, TM) \to \Om^{i+2k-1}(M, TM)}
{\Im ad_{\Psi}: \Om^{i-2k+1}(M, TM) \to \Om^i(M, TM)}.\end{array}
\end{equation}

\begin{example} \label{ex:Kaehler}
In the case of a K\"ahler manifold, using the K\"ahler form $\Psi = \om$, the differential $\Ll_\om$ on $\Om^\ast(M)$ is the complex differential $d^c := i (\bar{\p}- \p)$, 
whereas on $\Om^\ast(M, TM)$, 
$ad_\om$
 coincides with the Dolbeault differential $\bar{\p}: \Om^{p,q}(M, TM) \to \Om^{p,q+1}(M, TM)$ \cite{FN1956b}. Thus, these differentials recover well known and natural cohomology theories.
In particular, the cohomology algebras $H^\ast_\om(M)$ and $H^\ast_\om(M, TM)$ are finite dimensional if $M$ is closed.
\end{example}

Now we give some general strategies to compute $H^*_{\Psi}(M)$. 
First we summarize formulas of $\Ll_\Psi$.

\begin{lemma}[{\cite[Section 2.4]{KLS2}}]

\begin{equation*} 
\Ll_\Psi d\alpha^l = -d \Ll_\Psi \alpha^l, \quad 
\Ll_\Psi d^\ast \alpha^l = - d^\ast \Ll_\Psi \alpha^l
\quad 
\mbox{and thus} \quad 
\Ll_\Psi \triangle \alpha^l = \triangle \Ll_\Psi \alpha^l.
\end{equation*}
\begin{equation*} 
\triangle (\alpha \wedge \Psi) = (\triangle \alpha) \wedge \Psi, \qquad 
\triangle (\alpha \wedge \ast \Psi) = (\triangle \alpha) \wedge \ast \Psi,
\end{equation*}
where $\triangle$ is the Hodge Laplacian. 
As in the case of $d^\ast$, 
the formal adjoint
$\Ll_{\Psi;l}^\ast: \Om^l (M) \rightarrow \Om^{l-2k+1}(M)$ 
of 
$\Ll_{\Psi;l}: \Om^{l-2k+1}(M) \rightarrow \Om^l (M)$ 
is given by 
\begin{equation}\label{eq:formal-adjointLK}
\Ll_{\Psi;l}^\ast \alpha^l = (-1)^{n(n-l) + 1} \ast \Ll_{\Psi} \ast \alpha^l.
\end{equation}
\end{lemma}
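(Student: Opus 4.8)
The engine of the whole lemma is a first-order formula for $\Ll_\Psi$ in terms of the Levi-Civita connection. Working at the centre $p$ of geodesic normal coordinates, where $\nabla e_i|_p = 0$ and hence $d(\imath_{e_i}\Psi)|_p = \sum_j e^j\wedge\nabla_{e_j}\imath_{e_i}\Psi|_p = 0$ because $\nabla\Psi = 0$, I would expand $\Ll_\Psi = \imath_{\hat\Psi}\,d - d\,\imath_{\hat\Psi}$ using $\hat\Psi = \sum_i(\imath_{e_i}\Psi)\otimes e_i$ and $\imath_{\hat\Psi}\beta = \sum_i(\imath_{e_i}\Psi)\wedge\imath_{e_i}\beta$. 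Since all covariant derivatives of $\imath_{e_i}\Psi$ vanish at $p$, the remaining terms reorganize into ordinary Lie derivatives $\Ll_{e_i} = \imath_{e_i}d + d\imath_{e_i}$, yielding the tensorial identity
\[
\Ll_\Psi\alpha = \sum_i (\imath_{e_i}\Psi)\wedge\nabla_{e_i}\alpha ,
\]
which, being tensorial, then holds on all of $M$. Together with the frame formulas $d\alpha = \sum_j e^j\wedge\nabla_{e_j}\alpha$ and $d^\ast\alpha = -\sum_j\imath_{e_j}\nabla_{e_j}\alpha$, this reduces every identity in the lemma to a manipulation of covariant derivatives at $p$.

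The first identity $\Ll_\Psi d\alpha = -d\Ll_\Psi\alpha$ is purely algebraic and needs no geometry: from $\Ll_\Psi = \imath_{\hat\Psi}d - d\imath_{\hat\Psi}$ (note $|\hat\Psi| = 2k-1$ is odd in (\ref{eq:LK-deriv})) and $d^2 = 0$ one gets $\Ll_\Psi d = -d\imath_{\hat\Psi}d = -d\Ll_\Psi$. For the codifferential identity $\Ll_\Psi d^\ast\alpha = -d^\ast\Ll_\Psi\alpha$, I would substitute the two frame formulas into both sides at $p$ and expand the derivation $\imath_{e_j}$ across the wedge products. All first-order terms match directly; the mismatched terms are second covariant derivatives, and after antisymmetrizing they assemble into curvature expressions of the schematic form $\sum_{i,j}(\imath_{e_i}\Psi)\wedge\imath_{e_j}\,R(e_j,e_i)\alpha$ and $\sum_{i,j}(\imath_{e_j}\imath_{e_i}\Psi)\wedge R(e_j,e_i)\alpha$. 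These cancel because $\Psi$ is parallel, so the curvature operator annihilates it, $R(X,Y)\cdot\Psi = 0$; rewriting $R(e_j,e_i)$ as a derivation on the exterior algebra and invoking the first Bianchi identity converts the two families into one another up to sign. The commutation $\Ll_\Psi\triangle = \triangle\Ll_\Psi$ is then immediate from the two anticommutation relations and $\triangle = dd^\ast + d^\ast d$.

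For the second row, $d(\alpha\wedge\Psi) = d\alpha\wedge\Psi$ is clear from $d\Psi = 0$, while $d^\ast(\alpha\wedge\Psi)$ differs from $(d^\ast\alpha)\wedge\Psi$ only by a correction proportional to $\sum_j\nabla_{e_j}\alpha\wedge\imath_{e_j}\Psi$. Applying $d$ to this correction and comparing with the analogous correction coming from $d^\ast d(\alpha\wedge\Psi)$, the two terms again combine into a curvature expression $\sum_{j,m}e^m\wedge(R(e_j,e_m)\alpha)\wedge\imath_{e_j}\Psi$ which vanishes once one uses $R\cdot\Psi = 0$ and Bianchi. Since $\ast\Psi$ is parallel whenever $\Psi$ is, the identity for $\ast\Psi$ follows by the identical argument. (Alternatively one may invoke the Weitzenb\"ock formula $\triangle = \nabla^\ast\nabla + \mathcal R$: wedging by the parallel form $\Psi$ commutes with $\nabla^\ast\nabla$ trivially, and the curvature term $\mathcal R$ is handled by the same use of $R\cdot\Psi = 0$.)

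Finally, the adjoint formula (\ref{eq:formal-adjointLK}) I would obtain from the factorization $\Ll_\Psi = \imath_{\hat\Psi}d - d\imath_{\hat\Psi}$ by taking $L^2$-adjoints of each factor. Two ingredients enter: the standard Hodge identity $d^\ast = (-1)^{n(l+1)+1}\ast d\ast$ on $\Om^l(M)$, and the fiberwise (purely algebraic) fact that insertion of $\hat\Psi$ satisfies $\imath_{\hat\Psi}^\ast = (\pm 1)\,\ast\imath_{\hat\Psi}\ast$, the $TM$-valued analogue of $\imath_X^\ast = \epsilon_{X^\flat} = (\pm 1)\ast\imath_X\ast$. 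Substituting these into $\Ll_\Psi^\ast = d^\ast\imath_{\hat\Psi}^\ast - \imath_{\hat\Psi}^\ast d^\ast$ and collapsing the pairs $\ast\ast = (-1)^{l(n-l)}$ reassembles $\ast(\imath_{\hat\Psi}d - d\imath_{\hat\Psi})\ast = \ast\Ll_\Psi\ast$ up to an overall sign. I expect the genuine labor here to be entirely in the sign bookkeeping: the Hodge star changes degree, so the exponents $l(n-l)$ must be evaluated on the correct intermediate degrees to land on the stated $(-1)^{n(n-l)+1}$. The main obstacle throughout is therefore twofold: organizing the second-order terms in the $d^\ast$ and $\triangle$ identities into curvature operators and cancelling them via $R\cdot\Psi = 0$ together with the first Bianchi identity, and controlling the degree-dependent signs in the adjoint computation.
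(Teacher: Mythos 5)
The paper does not prove this lemma; it is quoted from \cite[Section 2.4]{KLS2}, and the only methodological hint given here is the remark before Proposition \ref{prop:FN} that such statements follow from ``a straightforward computation in geodesic normal coordinates.'' Your reconstruction is exactly that computation, and its backbone is correct: the identity $\Ll_\Psi\alpha=\sum_i(\imath_{e_i}\Psi)\wedge\nabla_{e_i}\alpha$ does hold (the sign works out because $\imath_{e_i}\Psi$ has odd degree $2k-1$, so $\imath_{e_i}d-(-1)^{2k-1}d\imath_{e_i}=\Ll_{e_i}$ appears after the Leibniz expansion), the anticommutation with $d$ is purely algebraic from $d^2=0$, and the adjoint formula follows from the factorization $\Ll_\Psi=\imath_{\hat\Psi}d-d\imath_{\hat\Psi}$ together with $\imath_{\hat\Psi}^\ast=\pm\ast\imath_{\hat\Psi}\ast$; the latter identity is legitimate precisely because the potential cross terms involve $\imath_{e_i}\imath_{e_i}\Psi=0$, a point worth making explicit since contraction by a form of degree $>1$ is not a derivation. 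The one place where you assert rather than prove is the genuinely hard step: the cancellation of the two curvature families $\sum_{i,j}(\imath_{e_j}\imath_{e_i}\Psi)\wedge R(e_j,e_i)\alpha$ and $\sum_{i,j}(\imath_{e_i}\Psi)\wedge\imath_{e_j}R(e_j,e_i)\alpha$ in the $d^\ast$-identity. The mechanism you name ($R(X,Y)\cdot\Psi=0$ for parallel $\Psi$, plus the first Bianchi identity to exchange which slots of $\Psi$ versus $\alpha$ absorb the curvature indices) is the right one and is what the reference uses, but as written it is a claim, not a verification; this is where all the actual work of the lemma sits.

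For the second display you propose a second, independent curvature computation (or Weitzenb\"ock). There is a shortcut that avoids this: a one-line Leibniz calculation gives $d^\ast(\alpha\wedge\Psi)=(d^\ast\alpha)\wedge\Psi\pm\Ll_\Psi\alpha$, i.e.\ the correction term is, up to sign and reordering of the wedge, $\Ll_\Psi\alpha$ itself. Since $d(\alpha\wedge\Psi)=(d\alpha)\wedge\Psi$, one then gets
\[
\triangle(\alpha\wedge\Psi)-(\triangle\alpha)\wedge\Psi=\pm\bigl(d\Ll_\Psi\alpha+\Ll_\Psi d\alpha\bigr)=0
\]
by the first (purely algebraic) identity, with no further appeal to curvature; the case of $\ast\Psi$ is identical since $\ast\Psi$ is parallel. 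So the curvature/Bianchi argument is needed exactly once, for $\Ll_\Psi d^\ast=-d^\ast\Ll_\Psi$, and everything else in the lemma is formal consequence plus sign bookkeeping.
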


Recall that for a closed oriented Riemannian manifold $(M,g)$ there is the {\em Hodge decomposition }of differential forms
\begin{equation} \label{Hodge-dec}
\Om^l(M) = \Hh^l(M) \oplus d\Om^{l-1} (M) \oplus d^\ast\Om^{l+1} (M),
\end{equation}
where $\Hh^l(M) \subset \Om^l(M)$ denotes the space of harmonic forms.

We define the {\em space of $\Ll_\Psi$-harmonic forms} as
\begin{equation} \label{eq:LK-harmonic}
\begin{array}{lll} \Hh_\Psi^l(M) & := & \{ \alpha \in \Om^l(M) \mid \Ll_\Psi \alpha = \Ll_\Psi^\ast \alpha = 0\}\\[3mm]
& \stackrel{(\ref{eq:formal-adjointLK})}= & \{ \alpha \in \Om^l(M) \mid \Ll_\Psi \alpha = \Ll_\Psi \ast \alpha = 0\}
\end{array}
\end{equation}
Evidently, the Hodge-$\ast$ yields an isomorphism
\begin{equation} \label{eq:ast-Hodge}
\ast: \Hh_\Psi^l(M) \longrightarrow \Hh_\Psi^{n-l}(M).
\end{equation}

Since $\Hh^l_\Psi (M) \subset \ker \Ll_{\Psi;l+2k-1}$ and $\Hh^l_\Psi (M) \cap \Im(\Ll_{\Psi;l}) = 0$, there is a canonical injection
\begin{equation} \label{eq:inject-harmonic}
\imath_l: \Hh^l_\Psi(M) \hookrightarrow H^l_\Psi(M).
\end{equation}

This is analogous to the inclusion of harmonic forms into the de Rham cohomology of a manifold, which for a closed manifold is an isomorphism due to the Hodge decomposition (\ref{Hodge-dec}). Therefore, one may hope that the maps $\imath_l$ are isomorphisms as well. It is not clear if this is always true, but we shall give conditions which assure this to be the case and show that in the applications we have in mind, this condition is satisfied.

\begin{definition} \label{def:reg-cohom}
We say that the differential $\Ll_\Psi$ is {\em $l$-regular }for $l \in \N$ if there is a direct sum decomposition
\begin{equation} \label{eq:condition-split}
\Om^l(M) = \ker (\Ll_{\Psi;l}^\ast) \oplus \Im(\Ll_{\Psi;l}).
\end{equation}
\end{definition}

A standard result from elliptic theory states that $\Ll_\Psi$ is $l$-regular if the differential operator $\Ll_{\Psi;l}: \Om^{l-2k+1}(M) \to \Om^l(M)$ is elliptic, overdetermined elliptic or underdetermined elliptic, see e.g. \cite[p.464, 32 Corollary]{Besse1987}.

The following theorem now relates the cohomology $H^\ast_\Psi(M)$ to the $\Ll_\Psi$-harmonic forms $\Hh^\ast_\Psi(M)$.

\begin{theorem}[{\cite[Theorem 2.7]{KLS2}}]

\label{thm:cohom-harmonic}
\begin{enumerate}
\item If $\Ll_\Psi$ is $l$-regular, then the map $\imath_l$ from (\ref{eq:inject-harmonic}) is an isomorphism.\item There are direct sum decompositions
\begin{eqnarray}
\label{eq:HL-decomp}
H_\Psi^l(M) & = & \Hh^l(M) \oplus H_\Psi^l(M)_d \oplus H_\Psi^l(M)_{d^\ast}\\
\label{eq:decomp-Hh_K}
\Hh_\Psi^l(M) & = & \Hh^l(M) \oplus \Hh_\Psi^l(M)_d \oplus \Hh_\Psi^l(M)_{d^\ast},
\end{eqnarray}
where $\Hh^l(M)$ is the space of harmonic $l$-forms on $M$, $H_\Psi^l(M)_d$ and $H_\Psi^l(M)_{d^\ast}$ are the cohomologies of $(d\Om^\ast(M), \Ll_\Psi)$ and $(d^\ast\Om^\ast(M), \Ll_\Psi)$, respectively, and where $\Hh_\Psi^l(M)_d := \Hh_\Psi^l(M) \cap d\Om^{l-1}(M)$, $\Hh_\Psi^l(M)_{d^\ast} := \Hh_\Psi^l(M) \cap d^\ast\Om^{l+1}(M)$. Moreover, the injective map $\imath_l$ from (\ref{eq:inject-harmonic}) preserves this decomposition, i.e.,
\[
\imath_l: \Hh_\Psi^l(M)_d \hookrightarrow H_\Psi^l(M)_d \quad \mbox{and} \quad \imath_l: \Hh_\Psi^l(M)_{d^\ast} \hookrightarrow H_\Psi^l(M)_{d^\ast}.
\]
\item
There are isomorphisms
\begin{align}
\label{eq:Hl-split1} d: H_\Psi^l(M)_{d^\ast} \to H_\Psi^{l+1}(M)_d \quad \mbox{and} \quad d^\ast: H_\Psi^l(M)_d \to H_\Psi^{l-1}(M)_{d^\ast}\\
\label{eq:Hl-split} d: \Hh_\Psi^l(M)_{d^\ast} \to \Hh_\Psi^{l+1}(M)_d \quad \mbox{and} \quad d^\ast: \Hh_\Psi^l(M)_d \to \Hh_\Psi^{l-1}(M)_{d^\ast}
\end{align}
\item If $\Ll_\Psi$ is $(l+1)$-regular and $(l-1)$-regular, then it is also $l$-regular.\\
\end{enumerate}
\end{theorem}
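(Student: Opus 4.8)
\emph{Proof proposal.} The plan is to derive all four assertions from one structural observation and then treat each part by Hodge-theoretic bookkeeping. By the preceding Lemma the degree-$(2k-1)$ differential $\Ll_\Psi$ anticommutes with both $d$ and $d^\ast$ and commutes with $\triangle$; taking $L^2$-adjoints, its formal adjoint satisfies $\Ll_\Psi^\ast d = -d\Ll_\Psi^\ast$ and $\Ll_\Psi^\ast d^\ast = -d^\ast\Ll_\Psi^\ast$, and $\Ll_\Psi^\ast$ also commutes with $\triangle$. First I would record that $\Ll_\Psi$ and $\Ll_\Psi^\ast$ consequently preserve each summand of the Hodge decomposition (\ref{Hodge-dec}): they annihilate $\Hh^l(M)$ (an $\Ll_\Psi$-image of a harmonic form is exact by (\ref{eq:LK-deriv}) and harmonic since $\Ll_\Psi$ commutes with $\triangle$, hence zero, and the adjoint case follows from (\ref{eq:formal-adjointLK})), they send $d\Om^\ast(M)$ to $d\Om^\ast(M)$, and they send $d^\ast\Om^\ast(M)$ to $d^\ast\Om^\ast(M)$. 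Thus $(\Om^\ast(M), \Ll_\Psi)$ splits as the direct sum of the three subcomplexes supported on harmonic, exact, and coexact forms.

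For part (1), injectivity of $\imath_l$ is already noted, so I would only prove surjectivity: given $\alpha$ with $\Ll_\Psi\alpha = 0$, the $l$-regular splitting (\ref{eq:condition-split}) writes $\alpha = \alpha_0 + \Ll_\Psi\beta$ with $\Ll_\Psi^\ast\alpha_0 = 0$; applying $\Ll_\Psi$ and using $\Ll_\Psi^2 = 0$ gives $\Ll_\Psi\alpha_0 = 0$, so $\alpha_0 \in \Hh_\Psi^l(M)$ represents $[\alpha]$. Part (2) is then immediate from the structural splitting: cohomology commutes with the finite direct sum of subcomplexes, and the harmonic subcomplex carries the zero differential, so it contributes exactly $\Hh^l(M)$ and yields (\ref{eq:HL-decomp}); applying the same type decomposition to the two conditions cutting out $\Hh_\Psi^l(M)$ in (\ref{eq:LK-harmonic}) gives (\ref{eq:decomp-Hh_K}), and $\imath_l$ respects the decomposition because it is induced by a type-preserving inclusion.

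For part (3) the essential point is elementary linear algebra on the Hodge summands: $d\colon d^\ast\Om^{l+1}(M) \to d\Om^l(M)$ and $d^\ast\colon d\Om^{l-1}(M) \to d^\ast\Om^l(M)$ are linear isomorphisms, since a closed coexact form and a coclosed exact form are both harmonic, hence vanish, while surjectivity is read off (\ref{Hodge-dec}). By the relations of the first paragraph these maps intertwine $\Ll_\Psi$ up to sign and preserve $\Ll_\Psi^\ast$-closedness, so they are isomorphisms of the corresponding subcomplexes shifting the type from coexact to exact (respectively exact to coexact); passing to cohomology yields (\ref{eq:Hl-split1}), and restricting to $\ker\Ll_\Psi \cap \ker\Ll_\Psi^\ast$ yields (\ref{eq:Hl-split}).

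Part (4) is the assertion I expect to be the main obstacle, and I would deduce it from the preceding structure. Because $\Ll_\Psi$ and $\Ll_\Psi^\ast$ preserve Hodge type and vanish on $\Hh^l(M)$, the decomposition (\ref{eq:condition-split}) holds in degree $l$ if and only if it holds separately on the exact and coexact summands there (the harmonic summand splitting trivially). The isomorphism $d$ of part (3) carries the coexact summand in degree $l$ onto the exact summand in degree $l+1$ and, intertwining both $\Ll_\Psi$ and $\Ll_\Psi^\ast$ up to sign (using injectivity of $d$ on coexact forms to match the kernels of $\Ll_\Psi^\ast$), transports the degree-$l$ coexact splitting to the degree-$(l+1)$ exact splitting; symmetrically $d^\ast$ transports the degree-$l$ exact splitting to the degree-$(l-1)$ coexact splitting. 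Hence the coexact half of $l$-regularity is supplied by $(l+1)$-regularity and the exact half by $(l-1)$-regularity. The delicate step is the careful verification that the part-(3) isomorphisms respect not merely the kernel and image of $\Ll_\Psi$ but also the kernel of the adjoint $\Ll_\Psi^\ast$, so that the full direct-sum datum of (\ref{eq:condition-split}), and not just its cohomological shadow, is faithfully carried across degrees.
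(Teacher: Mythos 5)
Your proof is correct, and it takes exactly the route that the quoted commutation relations ($\Ll_\Psi$ anticommuting with $d$ and $d^\ast$ and commuting with $\triangle$) are set up to enable --- the survey itself defers the proof entirely to \cite{KLS2}, where the argument is likewise the Hodge-theoretic splitting of $(\Om^\ast(M),\Ll_\Psi)$ into harmonic, exact and coexact subcomplexes, the standard regularity argument for part (1), and transport across degrees via $d$ and $d^\ast$ for parts (3) and (4). The only step you should write out explicitly in part (4) is that $\Im(\Ll_{\Psi;l})$, and not merely $\ker(\Ll_{\Psi;l}^\ast)$, is graded with respect to the Hodge decomposition (Hodge-decompose the preimage $\beta$ and use that $\Ll_\Psi$ kills its harmonic part while sending the exact and coexact parts to exact and coexact forms), and the ``delicate'' matching of $\ker\Ll_\Psi^\ast$ across $d$ that you flag does go through, since for coexact $\alpha$ the form $\Ll_\Psi^\ast\alpha$ is again coexact and is closed whenever $d\alpha$ is $\Ll_\Psi^\ast$-closed, hence vanishes.
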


Next, we consider another important case. 
We call a form $\Psi \in \Om^k(M)$ {\em multi-symplectic}, if $d \Psi = 0$ and for all $v \in TM$
\begin{equation} \label{eq:Psi-nondeg}
\imath_v \Psi = 0 \Longleftrightarrow v = 0.
\end{equation}

\begin{lemma} \label{lem:elliptic multi-symplectic}
If $\Psi \in \Om^{2k}(M)$ is multi-symplectic, then the differential operator $\Ll_{\Psi;l}: \Om^{l-2k+1}(M) \to \Om^l(M)$ is overdetermined elliptic for $l = 2k-1$ and underdetermined elliptic for $l = n$.
\end{lemma}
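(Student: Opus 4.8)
The plan is to reduce both assertions to a single computation of the principal symbol of the first-order operator $\Ll_{\Psi;l}=\Ll_{\hat\Psi}\big|_{\Om^{l-2k+1}(M)}$, where $\hat\Psi=\p_g\Psi\in\Om^{2k-1}(M,TM)$, and then to read off injectivity (resp.\ surjectivity) of that symbol directly from the multi-symplectic hypothesis.

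First I would compute the symbol. Since $\hat\Psi$ has odd form-degree $2k-1$, formula (\ref{eq:LK-deriv}) gives $\Ll_{\hat\Psi}=\imath_{\hat\Psi}\circ d-d\circ\imath_{\hat\Psi}$. The insertion $\imath_{\hat\Psi}$ is zeroth order (tensorial), while the symbol of $d$ at a covector $\xi\in T_x^*M$ is $\eps(\xi):=\xi\wedge(\cdot)$, so the symbol of $\Ll_{\hat\Psi}$ at $\xi$ is obtained by substituting $d\mapsto\eps(\xi)$, namely $\sigma_\xi(\Ll_{\hat\Psi})=\imath_{\hat\Psi}\circ\eps(\xi)-\eps(\xi)\circ\imath_{\hat\Psi}$. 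Evaluating on a decomposable $\hat\Psi=\kappa\otimes X$ and using $\imath_X(\xi\wedge\alpha)=\xi(X)\alpha-\xi\wedge\imath_X\alpha$ together with $\kappa\wedge\xi=-\xi\wedge\kappa$ (as $\kappa$ has odd degree), the two terms containing $\imath_X\alpha$ cancel and one is left with $\xi(X)\,\kappa\wedge\alpha$. Summing over the local expression $\hat\Psi=\p_g\Psi=\sum_i(\imath_{e_i}\Psi)\otimes e_i$ yields the clean formula
\[
\sigma_\xi(\Ll_{\hat\Psi})(\alpha)=\big(\imath_{\xi^\sharp}\Psi\big)\wedge\alpha=\Psi_\xi\wedge\alpha,\qquad \Psi_\xi:=\imath_{\xi^\sharp}\Psi\in\Lambda^{2k-1}T_x^*M,
\]
where $\xi^\sharp$ denotes the metric dual of $\xi$. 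The crucial consequence of multi-symplecticity is recorded here: for $\xi\neq0$ one has $\xi^\sharp\neq0$, so condition (\ref{eq:Psi-nondeg}) forces $\Psi_\xi\neq0$.

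With the symbol in hand, both cases become elementary linear algebra on $\Lambda^\bullet T_x^*M$. For $l=2k-1$ the domain is $\Om^0(M)$, so the symbol is fibrewise the map $f\mapsto f\,\Psi_\xi$; since $\Psi_\xi\neq0$ this is injective for every $\xi\neq0$, which is precisely overdetermined ellipticity. For $l=n$ the target is $\Om^n(M)$, with one-dimensional fibre $\Lambda^nT_x^*M$, and the symbol is $\alpha\mapsto\Psi_\xi\wedge\alpha$ for $\alpha\in\Lambda^{n-2k+1}T_x^*M$. Surjectivity onto the top exterior power amounts to the existence of some $\alpha$ with $\Psi_\xi\wedge\alpha\neq0$, and this holds because the wedge pairing $\Lambda^{2k-1}\times\Lambda^{n-2k+1}\to\Lambda^n$ is non-degenerate and $\Psi_\xi\neq0$. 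Hence the symbol is surjective for every $\xi\neq0$, i.e.\ the operator is underdetermined elliptic.

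The only delicate point is the symbol computation itself: the sign bookkeeping in the graded commutator, the cancellation of the $\imath_X\alpha$-terms, and the identification $\imath_\xi\hat\Psi=\imath_{\xi^\sharp}\Psi$ coming from $\hat\Psi=\p_g\Psi$ all need care. Once that formula is established, the two ellipticity statements require nothing beyond the facts that wedging by a fixed nonzero form is injective on scalars and hits the top degree, combined with the non-degeneracy of $\Psi$ guaranteed by the multi-symplectic condition.
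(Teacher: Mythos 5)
Your proof is correct and follows the same route as the one in the reference \cite{KLS2} (this survey omits the proof): compute the principal symbol of $\Ll_{\hat\Psi}$ as $\sigma_\xi(\Ll_{\hat\Psi})(\alpha)=(\imath_{\xi^\sharp}\Psi)\wedge\alpha$, note that multi-symplecticity gives $\imath_{\xi^\sharp}\Psi\neq 0$ for $\xi\neq 0$, and conclude injectivity on $\Lambda^0$ and surjectivity onto $\Lambda^n$ from the non-degeneracy of the wedge pairing. The symbol computation, including the cancellation of the $\imath_X\alpha$-terms via the graded commutator, is carried out correctly.
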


We also can make some statement for $H_\Psi^l(M)$ for special values of $l$.

\begin{proposition}\label{prop:chomom-LK_0,n}
For a parallel form $\Psi \in \Om^{2k}(M)$, we have 
\[
H_\Psi^0(\Om^\ast(M)) \cong \Hh_\Psi^0(M) = \{ f \in C^{\infty}(M) \mid \imath_{df^\#} \Psi = 0 \},
\]

If $\Psi$ is multi-symplectic, then $\Hh_\Psi^0(M) = \Hh^0(M)$ and $H_\Psi^n (\Om^\ast(M)) \cong \Hh_\Psi^n(M) = \Hh^n(M)$.
\end{proposition}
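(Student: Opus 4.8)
The plan is to treat the two degrees separately and to reduce everything to a pointwise formula for $\Ll_\Psi$ acting on functions. For degree $0$ I would first observe that both sides of the claimed isomorphism collapse to the same kernel. In the cohomology $H^0_\Psi(M)$ from \eqref{eq:def-cohom} the image term is $\Im(\Ll_\Psi\colon \Om^{1-2k}(M)\to \Om^0(M))$, which vanishes since $\Om^{1-2k}(M)=0$ for $k\ge 1$; hence $H^0_\Psi(M)=\ker(\Ll_\Psi\colon \Om^0(M)\to \Om^{2k-1}(M))$. Likewise, in the definition \eqref{eq:LK-harmonic} of $\Hh^0_\Psi(M)$ the condition $\Ll_\Psi^\ast\alpha=0$ is vacuous, because $\Ll_{\Psi;0}^\ast$ maps into $\Om^{1-2k}(M)=0$. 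Thus $\Hh^0_\Psi(M)=\ker(\Ll_\Psi\colon \Om^0(M)\to\Om^{2k-1}(M))$ as well, and the canonical injection $\imath_0$ from \eqref{eq:inject-harmonic} is the identity on this common kernel, which already yields $H^0_\Psi(M)\cong\Hh^0_\Psi(M)$.

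Next I would identify this kernel explicitly by computing $\Ll_\Psi f$ for $f\in C^\infty(M)$. Writing $\hat\Psi=\p_g\Psi=\sum_i(\imath_{e_i}\Psi)\otimes e_i$ from \eqref{eq:def-partial} (with the $TM$-factor identified via $g$), the Nijenhuis--Lie derivative \eqref{eq:LK-deriv} in degree $|\hat\Psi|=2k-1$ reads $\Ll_{\hat\Psi}f=\imath_{\hat\Psi}(df)-d(\imath_{\hat\Psi}f)$. Since contracting a vector into a function vanishes, $\imath_{\hat\Psi}f=0$, so only the first term survives, and unwinding the definition of $\imath_{\hat\Psi}$ gives
\[
\Ll_\Psi f=\imath_{\hat\Psi}(df)=\sum_i (df)(e_i)\,\imath_{e_i}\Psi=\imath_{df^\#}\Psi,
\]
where $df^\#=\sum_i(df)(e_i)e_i$ is the gradient. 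Hence $\ker(\Ll_\Psi\colon\Om^0\to\Om^{2k-1})=\{f\mid \imath_{df^\#}\Psi=0\}$, exactly the stated set. This pointwise computation is the crux of the argument: it is short but requires care with the degree $2k-1$ and with the metric identification in $\p_g$. Note that the parallelism of $\Psi$ is used here only through Proposition \ref{prop:FN}, which guarantees that $\Ll_\Psi$ is a genuine differential so that the cohomology is well defined.

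For the multi-symplectic case I would invoke the nondegeneracy \eqref{eq:Psi-nondeg}: $\imath_{df^\#}\Psi=0$ forces $df^\#=0$, i.e.\ $df=0$, so on a connected $M$ the space $\Hh^0_\Psi(M)$ consists of constants, which is precisely $\Hh^0(M)$. For the top degree I would combine two inputs already available. First, Lemma \ref{lem:elliptic multi-symplectic} shows that $\Ll_{\Psi;n}$ is underdetermined elliptic, hence $\Ll_\Psi$ is $n$-regular, so Theorem \ref{thm:cohom-harmonic}(1) gives $H^n_\Psi(M)\cong\Hh^n_\Psi(M)$. Second, the Hodge-$\ast$ isomorphism \eqref{eq:ast-Hodge} identifies $\Hh^n_\Psi(M)=\ast\,\Hh^0_\Psi(M)=\ast\,\Hh^0(M)=\Hh^n(M)$ on a closed oriented connected $M$. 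Chaining these two isomorphisms yields $H^n_\Psi(M)\cong\Hh^n_\Psi(M)=\Hh^n(M)$, which completes the proof.
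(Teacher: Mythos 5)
Your argument is correct, and it is assembled from exactly the ingredients the paper provides for this purpose (the vanishing of the image and adjoint terms in degree $0$, the pointwise identity $\Ll_\Psi f=\imath_{df^\#}\Psi$, Lemma \ref{lem:elliptic multi-symplectic} plus Theorem \ref{thm:cohom-harmonic}(1) for $n$-regularity, and the Hodge-$\ast$ isomorphism (\ref{eq:ast-Hodge})); the survey itself states the proposition without proof, deferring to \cite{KLS2}, and your route is the intended one. The only implicit hypothesis worth flagging is that $M$ must be closed (and oriented) for the identification $\{f\mid df=0\}=\Hh^0(M)$ and for the elliptic splitting used in degree $n$, which is the standing assumption throughout that part of the paper.
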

Indeed, it can be shown that $H_\Psi^0(\Om^\ast(M))$ is infinite dimensional if $\Psi$ is not multi-symplectic.

\begin{proposition} 
\label{prop:chomom-LK 1-form}
Let $\Psi \in \Om^{2k}(M)$ be a parallel multi-symplectic form. Then
\begin{align*}
H_\Psi^{2k-1}(\Om^\ast(M)) &= \Hh^{2k-1}_\Psi(M), \\
\ker (\Ll_{\Psi; 2k}) &= \{ \alpha \in \Om^1(M) \mid \Ll_{\alpha^\#} (\ast \Psi) = 0 \quad \mbox{and} \quad d^\ast \alpha = 0\}.
\end{align*}
In particular, if $k \geq 2$ then $\ker \Ll_{\Psi; 2k} = \Hh^1_\Psi(M) \cong H^1_\Psi(M)$ and
\[
\Hh^{n-1}_\Psi(M) = \{ \alpha \in \Om^{n-1}(M) \mid \Ll_{(\ast\alpha)^\#} (\ast \Psi) = 0 \quad \mbox{and} \quad d\alpha = 0\}.
\]
\end{proposition}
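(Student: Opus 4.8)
The plan is to dispose of the four assertions in turn; throughout $M$ is closed and connected, I abbreviate $X:=\alpha^\#$ and let $A:=\nabla X\in\mathrm{End}(TM)$ denote the endomorphism $Y\mapsto\nabla_Y X$, with transpose $A^{T}$ and induced derivation $A\cdot$ on forms, $(A\cdot\Phi)(Y_1,\dots,Y_p):=-\sum_j\Phi(Y_1,\dots,AY_j,\dots,Y_p)$. The first assertion requires no new work: by Lemma \ref{lem:elliptic multi-symplectic} the operator $\Ll_{\Psi;2k-1}$ is overdetermined elliptic, hence $\Ll_\Psi$ is $(2k-1)$-regular, and Theorem \ref{thm:cohom-harmonic}(1) then turns the canonical injection $\imath_{2k-1}$ of (\ref{eq:inject-harmonic}) into the asserted isomorphism $H^{2k-1}_\Psi(\Om^\ast(M))=\Hh^{2k-1}_\Psi(M)$.

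The heart of the statement is the description of $\ker\Ll_{\Psi;2k}$, and I would base it on a single pointwise identity. First, since $\imath_{\hat\Psi}\alpha=\imath_X\Psi$ and $d\Psi=0$, Cartan's formula gives $d(\imath_{\hat\Psi}\alpha)=\Ll_X\Psi$, so $\Ll_{\hat\Psi}\alpha=\imath_{\hat\Psi}(d\alpha)-\Ll_X\Psi$. Two elementary computations (in geodesic normal coordinates) evaluate the terms: for parallel $\Psi$ one has $\Ll_X\Psi=-A\cdot\Psi$, while writing $d\alpha(Y,Z)=g(BY,Z)$ with $B=A-A^{T}$ a direct contraction yields $\imath_{\hat\Psi}(d\alpha)=-B\cdot\Psi$; the symmetric and skew parts then combine to the clean formula $\Ll_{\hat\Psi}\alpha=A^{T}\cdot\Psi$. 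Next I would record the algebraic transpose rule under the Hodge star, valid for every endomorphism $A$ and every form $\Phi$,
\[
A\cdot(\ast\Phi)=-\ast\big(A^{T}\cdot\Phi\big)-(\mathrm{tr}\,A)\,\ast\Phi ,
\]
which follows by linearity once it is checked on the skew part of $A$ (where $\ast$ is equivariant and $\mathrm{tr}=0$) and on the symmetric part (diagonalizing in an orthonormal eigenbasis). Since $\ast\Psi$ is again parallel, $\Ll_X(\ast\Psi)=-A\cdot(\ast\Psi)$, and inserting the transpose rule with $\Phi=\Psi$ together with $\mathrm{tr}\,A=\operatorname{div}X=-d^\ast\alpha$ yields the key identity
\begin{equation}
\Ll_{\alpha^\#}(\ast\Psi)=\ast\big(\Ll_{\hat\Psi}\alpha\big)-(d^\ast\alpha)\,\ast\Psi. \tag{$\star$}
\end{equation}

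From $(\star)$ the inclusion ``$\supseteq$'' is immediate, since $\Ll_{\alpha^\#}(\ast\Psi)=0$ and $d^\ast\alpha=0$ force $\ast(\Ll_{\hat\Psi}\alpha)=0$, hence $\Ll_{\hat\Psi}\alpha=0$. The reverse inclusion is where I expect the main obstacle. Assuming $\Ll_{\hat\Psi}\alpha=0$, identity $(\star)$ becomes $\Ll_{\alpha^\#}(\ast\Psi)=-(d^\ast\alpha)\,\ast\Psi$; as $\ast\Psi$ is closed, the left-hand side equals the exact form $d\,\imath_{\alpha^\#}(\ast\Psi)$, so applying $d$ gives $d(d^\ast\alpha)\wedge\ast\Psi=0$. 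Writing $f:=d^\ast\alpha$ and using $\ast(df\wedge\ast\Psi)=\pm\,\imath_{(df)^\#}\Psi$ this reads $\imath_{(df)^\#}\Psi=0$, whence the multi-symplectic hypothesis (\ref{eq:Psi-nondeg}) forces $df=0$. Thus $f$ is locally constant, and since $\int_M f\,\vol=\int_M d^\ast\alpha\,\vol=0$ on the closed manifold we conclude $f=d^\ast\alpha=0$; feeding this back into $(\star)$ gives $\Ll_{\alpha^\#}(\ast\Psi)=0$ as well. The delicate point is precisely this use of (\ref{eq:Psi-nondeg}) to upgrade $df\wedge\ast\Psi=0$ to $df=0$, after which integration over $M$ removes the surviving constant.

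Finally I would treat the ``$k\geq2$'' statements by a degree count. For $l=1$ the target $\Om^{2-2k}(M)$ of $\Ll^\ast_{\Psi;1}$ vanishes, so the coclosedness condition in (\ref{eq:LK-harmonic}) is vacuous and $\Hh^1_\Psi(M)=\ker\Ll_{\Psi;2k}$; likewise the domain $\Om^{2-2k}(M)$ of $\Ll_{\Psi;1}$ vanishes, so $\Im\Ll_{\Psi;1}=0$ and $H^1_\Psi(M)=\ker\Ll_{\Psi;2k}$ by (\ref{eq:def-cohom}). Hence $\ker\Ll_{\Psi;2k}=\Hh^1_\Psi(M)\cong H^1_\Psi(M)$, and the set-theoretic description follows from the previous paragraph. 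For the last identity I would transport this through the Hodge-$\ast$ isomorphism (\ref{eq:ast-Hodge}) $\ast\colon\Hh^{n-1}_\Psi(M)\to\Hh^1_\Psi(M)$: a form $\alpha\in\Om^{n-1}(M)$ lies in $\Hh^{n-1}_\Psi(M)$ iff $\ast\alpha\in\Hh^1_\Psi(M)$, i.e. iff $\Ll_{(\ast\alpha)^\#}(\ast\Psi)=0$ and $d^\ast(\ast\alpha)=0$; since $d^\ast(\ast\alpha)=\pm\ast\,d\alpha$ the latter is equivalent to $d\alpha=0$, giving exactly the stated description of $\Hh^{n-1}_\Psi(M)$.
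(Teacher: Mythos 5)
Your proof is correct and follows exactly the strategy the paper indicates: $(2k-1)$-regularity via Lemma \ref{lem:elliptic multi-symplectic} and Theorem \ref{thm:cohom-harmonic}(1) for the first identity, and a direct pointwise computation combined with an integration by parts argument (here $\int_M d^\ast\alpha\,\vol = 0$ together with the multi-symplectic nondegeneracy) for the description of $\ker(\Ll_{\Psi;2k})$, with the $k\ge 2$ statements following by the degree count and the Hodge-$\ast$ isomorphism. The key identity $(\star)$ and its sign checks are accurate, so you have supplied precisely the details the paper omits.
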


The first statement is an immediate consequence of Theorem \ref{thm:cohom-harmonic} and 
Lemma \ref{lem:elliptic multi-symplectic}. 
The second and the third statements follow 
from a direct computation and an integration by parts argument.

\section{The Fr\"olicher-Nijenhuis cohomology of manifolds with special holonomy}\label{sec:g2}

On a $G_2$-manifold, there is a canonical parallel $4$-form $\ast \varphi$, 
the Hodge dual of the $G_2$-structure $\varphi$. 
We may consider the differentials $\Ll_{\ast \varphi}$ and $ad_{\ast \varphi}$. 
On closed manifolds, we obtain the following results on their cohomology groups.

\begin{theorem}\label{thm:hG2} Let $(M^7, \varphi)$ be a closed $G_2$-manifold. Then for the cohomologies $H^i_{\ast \varphi}(M^7)$ and $H^i_{\ast \varphi}(M^7, TM^7)$ defined above, the following hold.
\begin{enumerate}
\item There is a Hodge decomposition
\begin{align*}
H^i_{\ast \varphi}(M^7) =\;& \Hh^i(M^7) \oplus (H^i_{\ast \varphi}(M^7) \cap d\Om^{i-1}(M^7)) \\ &\oplus (H^i_{\ast \varphi}(M^7) \cap d^\ast\Om^{i+1}(M^7)),
\end{align*}
where $\Hh^i(M^7)$ denotes the spaces of harmonic forms.
\item The Hodge-$\ast$ induces an isomorphism $\ast: H^i_{\ast \varphi}(M^7) \to H^{7-i}_{\ast \varphi}(M^7)$.
\item $H^i_{\ast \varphi}(M^7) = \Hh^i(M^7)$ for $i = 0,1,6,7$. For $i = 2,3,4,5$, $H^i_{\ast \varphi}(M^7)$ is infinite dimensional.
\item $\dim H^0_{\ast \varphi}(M^7, TM^7) = b^1(M^7)$; in particular, $H^0_{\ast \varphi}(M^7) = 0$ if $M^7$ has full holonomy $G_2$.
\item $\dim H^3_{\ast \varphi}(M^7, TM^7) \geq b^3(M^7) > 0$, as it contains all torsion free deformations of the $G_2$-structure modulo deformations by diffeomorphisms.
\end{enumerate}
\end{theorem}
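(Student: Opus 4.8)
The plan is to organize the five claims by depth. Statements (1) and (2) are formal consequences of the apparatus of Section~\ref{subs:parallel}; (3) is the analytic core and rests on a principal-symbol computation; and (4)--(5) are deformation-theoretic, flowing from Theorem~\ref{thm:FN-cohomology}. For (1) I would specialize Theorem~\ref{thm:cohom-harmonic}(2) to $\Psi=\ast\varphi$ (so $2k=4$, $n=7$): since $\Ll_\Psi$ commutes with $d$ and $d^\ast$ it preserves the Hodge decomposition (\ref{Hodge-dec}), so $\ker\Ll_\Psi$ and $\Im\Ll_\Psi$ split along it, and (\ref{eq:HL-decomp}) becomes the asserted decomposition once one observes that ordinary harmonic forms are $\Ll_\Psi$-closed (applying $\Ll_\Psi$ to a closed form gives an exact form, which if harmonic vanishes). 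For (2) I would use the adjoint identity (\ref{eq:formal-adjointLK}): $\ast$ conjugates $\Ll_\Psi$ into $\pm\Ll_\Psi^\ast$, hence carries the $\Ll_\Psi$-harmonic space $\Hh^l_\Psi(M)$ isomorphically onto $\Hh^{n-l}_\Psi(M)$ as in (\ref{eq:ast-Hodge}); combined with $\imath_l\colon\Hh^l_\Psi(M)\to H^l_\Psi(M)$ this gives the symmetry. The delicate input is that $\imath_l$ be an isomorphism in every degree; at the degrees where $(\Om^\bullet,\Ll_\Psi)$ is elliptic this is immediate, and in the remaining degrees I would match $d$- and $d^\ast$-type pieces through (\ref{eq:Hl-split1}), which $\ast$ interchanges.

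The heart is (3), and the key step is the symbol of $\Ll_{\Psi;l}\colon\Om^{l-3}(M)\to\Om^l(M)$. A short computation from (\ref{eq:LK-deriv}) and $\hat\Psi=\p_g\Psi$ gives, up to a nonzero constant, $\sigma_\xi(\Ll_\Psi)\alpha=(\imath_{\xi^\#}\ast\varphi)\wedge\alpha$; that is, the symbol is exterior multiplication by the $3$-form $\rho_\xi:=\imath_{\xi^\#}\ast\varphi$. For a unit covector, $G_2$-invariance lets me fix $\xi$, and under the stabilizer $SU(3)\subset G_2$ of $\xi^\#$ the form $\rho_\xi$ becomes $\pm\Im\Omega$ on $\xi^\perp\cong\C^3$, with $\Omega$ the holomorphic volume form. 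Splitting $\Lambda^\bullet$ according to divisibility by $\xi$ reduces the symbol complex $(\Lambda^\bullet,\wedge\rho_\xi)$ to two copies of $(\Lambda^\bullet\C^3,\wedge\,\Im\Omega)$, whose cohomology is an elementary computation and is exact except in degrees $2,3,4$. Reassembling, the symbol complex on $\R^7$ is exact precisely in degrees $0,1,6,7$. In those degrees the complex $(\Om^\bullet,\Ll_\Psi)$ is elliptic, so $H^l_{\ast\varphi}(M)$ is finite-dimensional; for $l=0,1$ I identify it with $\Hh^l(M)$ via Proposition~\ref{prop:chomom-LK_0,n} and Proposition~\ref{prop:chomom-LK 1-form} (for $l=1$ the kernel description forces $\alpha^\#$ to be Killing, hence parallel on a compact Ricci-flat $G_2$-manifold, so $\alpha$ is harmonic), and for $l=6,7$ by the symmetry (2).

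The main obstacle is the infinite-dimensionality in degrees $l=2,3,4,5$: ellipticity only yields finiteness, so non-exactness of the symbol complex does not by itself prove infinite-dimensionality. I would argue directly at $l=2$, where $H^2_{\ast\varphi}(M)=\ker(\Ll_{\Psi;5}\colon\Om^2\to\Om^5)$, and dualize via (2) to $l=5$, obtaining $l=3,4$ from these through (\ref{eq:Hl-split1}) and (\ref{eq:HL-decomp}). Since the symbol $\wedge\rho_\xi$ on $\Lambda^2$ has a kernel of constant positive rank in every codirection, the equation $\Ll_\Psi\alpha=0$ is underdetermined along a sub-bundle; I would exploit this to manufacture an infinite-dimensional family of solutions, solving an overdetermined-elliptic equation for the symbol-transverse component over a freely varying symbol-kernel component. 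Controlling this transverse solve, and checking linear independence of the resulting classes, is the step I expect to be hardest; it is also exactly what is needed to secure the closed-range (regularity) statements left open in (2).

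Finally, for (4) I would invoke Theorem~\ref{thm:FN-cohomology}(1): $H^0_{\ast\varphi}(M,TM)$ consists of the vector fields $X$ with $\Ll_X\hat\Psi=0$. As the $G_2$ $4$-form $\ast\varphi$ determines the metric (its $GL(7)$-orbit is open with stabilizer $G_2$), preserving $\hat\Psi$ is equivalent to $X$ being a $\varphi$-preserving Killing field; on a compact Ricci-flat manifold these are parallel, so the space is isomorphic to the parallel $1$-forms $\cong\Hh^1(M)$ of dimension $b^1(M^7)$, which vanishes exactly under full holonomy $G_2$. For (5) I would use Theorem~\ref{thm:FN-cohomology}(2): a torsion-free deformation $\varphi_t$ produces Maurer--Cartan elements $\hat\Psi_t=\p_{g_t}(\ast_t\varphi_t)$ by Proposition~\ref{prop:FN}, and differentiation gives a map, injective modulo diffeomorphisms, from the $G_2$ moduli space into $H^3_{\ast\varphi}(M,TM)$. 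By Joyce's theorem this moduli space is smooth of dimension $b^3(M^7)$, and $b^3>0$ since $[\varphi]\neq0$ (indeed $\int_M\varphi\wedge\ast\varphi>0$ with $\ast\varphi$ closed); the inequality, rather than equality, reflects that $\hat\Psi$ admits Maurer--Cartan deformations more general than those coming from $G_2$-structures.
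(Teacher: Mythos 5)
Your overall architecture tracks the paper's: regularity/ellipticity in the extreme degrees, Propositions \ref{prop:chomom-LK_0,n} and \ref{prop:chomom-LK 1-form} plus Bochner for $i=0,1,6,7$, Hodge-$\ast$ duality, and the deformation-theoretic reading of $H^0$ and $H^3$ of $\Om^\ast(M,TM)$ for parts (4)--(5). Your symbol computation $\sigma_\xi(\Ll_{\ast\varphi})\alpha=(\imath_{\xi^\#}\ast\varphi)\wedge\alpha$ and its $SU(3)$-reduction to $\wedge\,\Im\Omega$ on $\C^3$ is correct and is in fact a more systematic route to the over-/underdetermined ellipticity statements that the paper only asserts ``by a simple calculation''; that part is a genuine, if modest, improvement in transparency. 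Two smaller caveats: in (2) the duality $H^l\cong H^{7-l}$ at the level of \emph{cohomology} (not just $\Ll_\Psi$-harmonic forms) needs $l$-regularity in \emph{every} degree, which the paper supplies by combining the degree-$3,4,6,7$ ellipticity with Theorem \ref{thm:cohom-harmonic}(4) for degree $5$ --- your proposed patch via (\ref{eq:Hl-split1}) is workable but should be made explicit to avoid circularity; and in (4)--(5) the step ``$\Ll_X(\p_g\ast\varphi)=0 \Leftrightarrow \Ll_X\varphi=0$'' is not quite the openness of the $GL(7)$-orbit of $\ast\varphi$: the relevant object is the $TM$-valued form $\p_{g_\varphi}\ast_{g_\varphi}\varphi$, and the paper isolates exactly what is needed in Lemma \ref{lem:uniq}, namely that $\varphi\mapsto\p_{g_\varphi}\ast_{g_\varphi}\varphi$ is an equivariant injective immersion.

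The genuine gap is the infinite-dimensionality in part (3) for $i=2,3,4,5$, which you yourself flag as unfinished. Non-injectivity of the symbol on $\Lambda^2$ does not by itself produce an infinite-dimensional kernel, and your proposed mechanism --- freely prescribing the symbol-kernel component and solving an overdetermined-elliptic equation for the transverse component --- is exactly the step that requires a closed-range/solvability argument you have not supplied; as stated it is a plan, not a proof. The paper takes a different and concrete route: it first establishes $l$-regularity in \emph{all} degrees (using overdetermined/underdetermined ellipticity of the individual operators $\Ll_{\ast\varphi;l}$, not ellipticity of the three-term complexes), so that $H^l_{\ast\varphi}(M)=\Hh^l_{\ast\varphi}(M)$ by Theorem \ref{thm:cohom-harmonic}(1); then by (\ref{eq:Hl-split}) it reduces degree $2$ to the explicit space $\Hh^2_{\ast\varphi}(M)_{d^\ast}=\{\alpha\in d^\ast\Om^3(M)\mid d^\ast(\alpha\wedge\ast\varphi)=0\}$, which is identified with an infinite-dimensional function space by decomposing $\Om^\ast(M)$ into irreducible $G_2$-types and invoking the Hodge decomposition; degree $3$ is handled similarly and degrees $4,5$ follow by $\ast$-duality. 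To close your argument you would either need to carry out that representation-theoretic identification or genuinely establish the surjectivity of your transverse solve together with linear independence of the resulting classes.
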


In \cite[Theorem 3.5]{KLS2}, 
we give a precise description of the cohomology ring $H^\ast_{\ast \varphi}(M^7)$. 

\begin{remark}
On a $\Sp$-manifold, there is also a canonical parallel $4$-form 
and we obtain the similar results. For more details, see \cite[Theorem 4.2]{KLS2}. 

Recently, we also computed $H^i_{\Phi}(M)$
for the real part of a holomorphic volume form $\Psi$ in $4n$-dimensional Calabi-Yau manifolds 
in \cite{KLS3}. 
When $n=1$, it is isomorphic to the de Rham cohomology. 
When $n \geq 2$, as in the $G_2$ and $\Sp$-case, 
it is regular again, and all summands involved other than the harmonic forms are infinite dimensional.
\end{remark}

\begin{proof}[Outline of the proof of Theorem \ref{thm:hG2}]
We begin by showing the $l$-regularity of $\Ll_{\ast \varphi}$. For $l < 3$ and $l > 7$, this is obvious as then $\Ll_{\ast \varphi; l} = 0$. By Lemma \ref{lem:elliptic multi-symplectic}, $\Ll_{\ast \varphi,l}$ is overdetermined elliptic for $l = 3$ and underdetermined elliptic for $l=7$, whence $\Ll_{\ast \varphi}$ is also $3$- and $7$-regular. 

By a simple calculation, it follows that 
$\Ll_{\ast \varphi,l}$ is overdetermined elliptic for $l = 4$ and underdetermined elliptic for $l=6$, whence $\Ll_{\ast \varphi}$ is $4$-regular and $6$-regular. 
Thus it is also $5$-regular by Theorem \ref{thm:cohom-harmonic}(4). Therefore, the $l$-regularity of $\Ll_{\ast \varphi,l}$ for all $l$ is established, whence by Theorem \ref{thm:cohom-harmonic}(1), $H^l_{\ast \varphi}(M) = \Hh^l_{\ast \varphi}(M)$.

For $l = 0,7$, $\Hh^l_{\ast \varphi}(M) \cong \Hh^l(M)$ by Proposition \ref{prop:chomom-LK_0,n}.

For $l = 1$, $H^1_{\ast \varphi}(M) = \ker \Ll_{\ast \varphi}|_{\Om^1(M)}$. Thus, by Proposition \ref{prop:chomom-LK 1-form}, $\alpha \in H^1_{\ast \varphi}(M)$ implies that $\Ll_{\alpha^\#}(\varphi) = 0$, which in turn implies that $\alpha^\#$ is a Killing vector field. Since a $G_2$-manifold is Ricci flat, it follows by Bochner's theorem that $\alpha^\#$ is parallel, whence so is $\alpha$. In particular, $\alpha$ is harmonic, showing that $H^1_{\ast \varphi}(M) = \Hh^1(M)$. For $l = 6$, we have $H^6_{\ast \varphi}(M) = \ast H^1_{\ast \varphi}(M) = \ast \Hh^1(M) = \Hh^6(M)$. This shows that $\Hh^l_{\ast \varphi}(M) \cong \Hh^l(M)$ for $l = 1, 6$.

Next, for $l = 2$, we have $\Hh^2_{\ast \varphi}(M)_d = 0$ by (\ref{eq:Hl-split1}). Thus, we need to determine
\[
\Hh^2_{\ast \varphi}(M)_{d^\ast} = \{ \alpha^2 \in d^\ast \Om^3(M) \mid d^\ast (\alpha^2 \wedge \ast \varphi) = 0\}.
\]
We can investigate this space in detail 
by the irreducible decomposition of $\Om^*(M)$ under the $G_2$-action 
and the Hodge decomposition. 
Then we can prove that $\Hh^2_{\ast \varphi}(M)_{d^\ast}$ is isomorphic to 
an infinite dimensional function space. 
We can prove the case of $l=3$ similarly. 

Again, since $\ast: \Hh^l_{\ast \varphi}(M) \to \Hh^{7-l}_{\ast \varphi}(M)$ is an isomorphism, the assertions for $l = 4, 5$ follow.\\

Next, we consider $H^\ast_{\ast \varphi}(M^7, TM^7)$. 
First, note the following.

\begin{lemma}\label{lem:uniq} 
Let $V$ be an oriented $7$-dimensional vector space, and let $\Lambda^3 _{G_2} V^\ast \subset \Lambda^3 V^\ast$ be the set of $G_2$-structures on $V$. 
By definition, the group $GL^+(V)$ of orientation preserving automorphisms of $V$ act 
transitively on $\Lambda^3 _{G_2} V^\ast$ so that
$
\Lambda^3 _{G_2} V^\ast = GL^+(V)/G_2
$.
Then the map
\begin{eqnarray*} & \frak{C}: \Lambda^3 _{G_2} V^\ast \longrightarrow \Lambda ^3 V^\ast \otimes V, & \qquad \varphi \longmapsto \p_{g_\varphi} (\ast_{g_\varphi} \varphi)
\end{eqnarray*}
is a $GL^+(V)$-equivariant injective immersion.  
Here, $\p_g$ is the map from (\ref{eq:def-partial}), and 
$g_\varphi$ denotes the metric induced by $\varphi$.
\end{lemma}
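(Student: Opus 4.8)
The plan is to establish the three asserted properties in turn, using throughout that $\Lambda^3_{G_2}V^\ast = GL^+(V)/G_2$ is homogeneous and that every ingredient of $\frak{C}$ is natural. The equivariance $\frak{C}(A\cdot\varphi)=A\cdot\frak{C}(\varphi)$ is formal: the induced metric satisfies $g_{A\cdot\varphi}=A\cdot g_\varphi$, and both the Hodge operator and the contraction $\p$ intertwine the $GL^+(V)$-actions associated with the respective metrics, so their composition does as well. I would also record two elementary facts for later use. First, for any fixed metric $\p_g$ is injective on forms of positive degree, since $\p_g\alpha=\sum_i(\imath_{e_i}\alpha)\otimes e_i=0$ forces every $\imath_{e_i}\alpha$ to vanish. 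Second, writing $\psi:=\ast_{g_\varphi}\varphi\in\Lambda^4V^\ast$, the element $\frak{C}(\varphi)=\p_{g_\varphi}\psi$, viewed as a homomorphism $V^\ast\to\Lambda^3V^\ast$, is $\xi\mapsto\imath_{\xi^\sharp}\psi$ with $\sharp$ the $g_\varphi$-raising; this pinpoints exactly where the metric enters.

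For the immersion property I would use equivariance to reduce to one point: since $\frak{C}$ is equivariant and $GL^+(V)$ acts transitively, $d\frak{C}_\varphi$ has constant rank, so it suffices to show $d\frak{C}_{\varphi_0}$ is injective at the standard $G_2$-form $\varphi_0$. At $\varphi_0$ this differential is $G_2$-equivariant, so I would decompose the source into irreducibles $\Lambda^3V^\ast=\Lambda^3_1\oplus\Lambda^3_7\oplus\Lambda^3_{27}$. As these summands are irreducible and pairwise non-isomorphic, Schur's lemma reduces injectivity to showing that $d\frak{C}_{\varphi_0}$ is nonzero on each. On $\Lambda^3_7$ the induced metric is unchanged to first order (the metric-variation map $\Lambda^3V^\ast\to S^2V^\ast$ kills $\Lambda^3_7$, since the $7$-dimensional representation does not occur in $S^2V^\ast\cong\Lambda^3_1\oplus\Lambda^3_{27}$), so only the variation of $\varphi$ inside $\ast_{g_0}\varphi$ contributes and $d\frak{C}_{\varphi_0}=\p_{g_0}\circ\ast_{g_0}$ there, a composite of injective maps. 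On $\Lambda^3_1$ a scaling-weight computation gives $\frak{C}(\lambda\varphi_0)=\lambda^{2/3}\frak{C}(\varphi_0)$, whose derivative $\tfrac23\frak{C}(\varphi_0)$ is nonzero.

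The summand $\Lambda^3_{27}$ is the crux and is where I expect the main difficulty, since this is precisely the direction in which $g_\varphi$ moves, so all contributions to $d\frak{C}$ — from $\varphi$, from $\ast_{g_\varphi}$, and from $\p_{g_\varphi}$ — are active. I would control it via the linearization of $\varphi\mapsto\ast_{g_\varphi}\varphi$, whose $\Lambda^3_{27}$-component equals $-\ast_{g_0}$ (coefficient $-1$, in particular nonzero), and then check that adding the $\p$-variation does not cancel it. A more conceptual route, which I would likely prefer, is to observe that $d\frak{C}_{\varphi_0}$ cannot annihilate $\Lambda^3_{27}$ because $\frak{C}(\varphi)$ determines the metric $g_\varphi$, while $\varphi\mapsto g_\varphi$ already has nonzero derivative on $\Lambda^3_{27}$ (it is an isomorphism onto $S^2_0V^\ast$ there); thus a nonzero $\Lambda^3_{27}$-variation genuinely changes the metric recorded in $\frak{C}$. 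Establishing this non-degeneracy is the heart of the argument.

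Finally, for injectivity I would again use equivariance and transitivity: $\frak{C}$ is injective if and only if $\mathrm{Stab}_{GL^+(V)}(\frak{C}(\varphi_0))=G_2$, and the inclusion $G_2\subseteq\mathrm{Stab}(\frak{C}(\varphi_0))$ is automatic. It remains to show that any $A$ fixing $\frak{C}(\varphi_0)$ lies in $G_2$. Using the description $\frak{C}(\varphi_0):\xi\mapsto\imath_{\xi^\sharp}\psi_0$, and noting that $\psi_0$ is nondegenerate in the sense $\imath_v\psi_0=0\Rightarrow v=0$, the image $\{\imath_v\psi_0:v\in V\}\subseteq\Lambda^3V^\ast$ together with the isomorphism onto it encodes both $\psi_0$ and the raising $\sharp=g_0^{-1}$; reconstructing the metric $g_0$ from these data — via the nondegeneracy of the $G_2$ four-form $\psi_0$ and the Hitchin-type formula recovering $g_\varphi$ from $\varphi$ — forces $A$ to preserve $g_0$, and then $A\in O(7)\cap\mathrm{Stab}(\psi_0)=G_2$. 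I expect this metric-recovery to be the subtle point; it is the same phenomenon as the $\Lambda^3_{27}$ computation, namely that $\frak{C}$ is nondegenerate transverse to the metric-preserving directions, which is the genuinely $G_2$-specific input and where I would concentrate the effort.
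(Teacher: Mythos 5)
The paper states this lemma without proof (it is quoted from the reference [KLS2017]), so there is no in-text argument to compare against; judged on its own, your proposal sets up the right framework but stops short of the actual content. The reductions you make are all correct: equivariance is indeed formal from naturality of $g_\varphi$, $\ast_{g_\varphi}$ and $\p_{g_\varphi}$; equivariance plus transitivity reduces the immersion property to injectivity of $d\frak{C}_{\varphi_0}$ at one point; Schur's lemma reduces that to non-vanishing on each of $\Lambda^3_1,\Lambda^3_7,\Lambda^3_{27}$; and injectivity of $\frak{C}$ is equivalent to $\mathrm{Stab}_{GL^+(V)}(\frak{C}(\varphi_0))=G_2$. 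Your treatment of $\Lambda^3_1$ (the scaling weight $\lambda^{2/3}$ is right: $g_{\lambda\varphi}=\lambda^{2/3}g_\varphi$, $\ast$ on $3$-forms scales by $\lambda^{1/3}$, $\p_g$ by $\lambda^{-2/3}$) and of $\Lambda^3_7$ (the metric variation vanishes because $\mathbf 7$ does not occur in $S^2V^\ast$) is correct and complete.

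The genuine gaps are exactly the two steps you flag and defer: (i) non-vanishing of $d\frak{C}_{\varphi_0}$ on $\Lambda^3_{27}$, and (ii) the stabilizer computation $\mathrm{Stab}_{GL^+(V)}(\chi_0)\subseteq G_2$. Neither is routine, and neither can be absorbed into the representation-theoretic formalism. For (i), the representation $\mathbf{27}$ occurs with multiplicity $2$ in $\Lambda^3V^\ast\otimes V$ (once in $\mathbf 7\otimes\mathbf 7$ and once in $\mathbf{27}\otimes\mathbf 7$), so the two contributions to $d\frak{C}_{\varphi_0}|_{\Lambda^3_{27}}$ --- the $-\ast_{g_0}$ coming from the linearization of $\varphi\mapsto\ast_{g_\varphi}\varphi$ and the term coming from the variation of $\p_{g_\varphi}$ --- live in a two-dimensional space of equivariant maps and could a priori cancel; an explicit computation is unavoidable. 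Your ``more conceptual route'' for (i) and your argument for (ii) both rest on the single unproven claim that $g_\varphi$ (equivalently $\varphi$) can be naturally reconstructed from $\chi_\varphi=\p_{g_\varphi}\ast_{g_\varphi}\varphi$: note that an $A\in GL^+(V)$ stabilizing $\chi_0$ only gives $\ast\varphi_0(A\cdot,A\cdot,A\cdot,A\cdot)=(A^\ast g_0)(\chi_0(\cdot,\cdot,\cdot),\cdot)$, so without first pinning down $A^\ast g_0$ you cannot conclude $A^\ast(\ast\varphi_0)=\ast\varphi_0$. Producing the explicit natural contraction identity that recovers $g_0$ (up to the orientation normalization) from $\chi_0$ is precisely the $G_2$-specific input of the lemma; as written, your proof identifies where this must happen but does not supply it.
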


\begin{proposition} \label{prop:H0MTM}
\begin{equation*}
H^0_{\ast \varphi}(M^7, TM^7) = \{ X \in {\frak X}(M^7) \mid \Ll_X \varphi = 0\} 
=\{ X \in {\frak X}(M^7) \mid \nabla X = 0\}.
\end{equation*}
\end{proposition}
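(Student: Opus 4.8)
The plan is to compute $H^0_{\ast\varphi}(M^7,TM^7)$ directly from the definition, then to translate the resulting condition in two steps: first from $\hat\Psi:=\p_g(\ast\varphi)$ to $\varphi$ by means of Lemma~\ref{lem:uniq}, and second from $\varphi$ to the parallelism condition by a Bochner argument. Since $\hat\Psi\in\Om^3(M^7,TM^7)$ and $\Om^{-3}(M^7,TM^7)=0$, the image of $ad_{\ast\varphi}$ in degree zero vanishes, so by (\ref{eq:def-cohom}) and Theorem~\ref{thm:FN-cohomology}(1),
\[
H^0_{\ast\varphi}(M^7,TM^7)=\ker\bigl(ad_{\ast\varphi}\colon\X(M^7)\to\Om^3(M^7,TM^7)\bigr)=\{X\in\X(M^7)\mid\Ll_X\hat\Psi=0\},
\]
where the last equality uses (\ref{eq:liefn}) and graded antisymmetry, $[\hat\Psi,X]^{FN}=-[X,\hat\Psi]^{FN}=-\Ll_X\hat\Psi$.

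For the first displayed equality of the Proposition I would exploit that $\varphi\mapsto\hat\Psi=\p_{g_\varphi}(\ast_{g_\varphi}\varphi)=\frak{C}(\varphi)$ is a natural construction, built from $\varphi$ using only the induced metric, the Hodge star, and the contraction $\p$, and therefore commutes with pullback by orientation-preserving diffeomorphisms. Writing $\phi_t$ for the flow of $X$, naturality gives $\phi_t^*\hat\Psi=\frak{C}(\phi_t^*\varphi)$ pointwise; differentiating at $t=0$ yields
\[
\Ll_X\hat\Psi=d\frak{C}_\varphi(\Ll_X\varphi),
\]
with $d\frak{C}_\varphi$ the pointwise differential of the map $\frak{C}$ of Lemma~\ref{lem:uniq}. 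As $\phi_t^*\varphi$ is at each point a curve in $\Lambda^3_{G_2}V^\ast$, the tensor $\Ll_X\varphi$ takes values in $T\Lambda^3_{G_2}V^\ast$, so the injectivity of $d\frak{C}$ (Lemma~\ref{lem:uniq} states that $\frak{C}$ is an immersion) gives $\Ll_X\hat\Psi=0\Leftrightarrow\Ll_X\varphi=0$. This establishes $H^0_{\ast\varphi}(M^7,TM^7)=\{X\in\X(M^7)\mid\Ll_X\varphi=0\}$.

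It remains to prove the second equality. If $\nabla X=0$ then, since $\varphi$ is parallel, $\Ll_X\varphi=0$, which gives one inclusion. Conversely, if $\Ll_X\varphi=0$ then $X$ preserves $\varphi$ and hence the induced metric $g_\varphi$, so $X$ is a Killing field. Because a $G_2$-manifold is Ricci-flat, the Bochner formula $\nabla^*\nabla X=\mathrm{Ric}(X)=0$, combined with the identity $\int_{M^7}\langle\nabla^*\nabla X,X\rangle=\|\nabla X\|_{L^2}^2$ and the compactness of $M^7$, forces $\nabla X=0$.

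I expect the main obstacle to be the middle step: justifying rigorously that the pointwise Lie derivative of $\hat\Psi$ equals $d\frak{C}_\varphi(\Ll_X\varphi)$ and that $\Ll_X\varphi$ lies in the tangent space to the manifold $\Lambda^3_{G_2}V^\ast$ of $G_2$-structures, so that the injectivity of the immersion $\frak{C}$ from Lemma~\ref{lem:uniq} can be invoked. The algebraic reduction of $H^0$ in the first paragraph and the Bochner argument in the third are by comparison routine.
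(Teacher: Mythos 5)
Your proposal is correct and follows essentially the same route as the paper: identify $H^0_{\ast\varphi}(M^7,TM^7)$ with $\ker ad_{\p_g\ast\varphi}$ on vector fields, use the $GL^+$-equivariance of $\frak{C}$ and its immersion property from Lemma \ref{lem:uniq} to reduce $\Ll_X(\p_g\ast\varphi)=0$ to $\Ll_X\varphi=0$, and then conclude $\nabla X=0$ via the Killing property, Ricci-flatness and Bochner's theorem. Your added care in checking that $\Ll_X\varphi$ is tangent to $\Lambda^3_{G_2}V^\ast$ and in noting the converse inclusion ($\nabla X=0\Rightarrow\Ll_X\varphi=0$, since $\varphi$ is parallel) only makes explicit what the paper leaves implicit.
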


This proposition implies the 4th part of Theorem \ref{thm:hG2}. 

\begin{proof}
Let $X \in {\frak X}(M^7)$ be a vector field, $p \in M^7$ and denote by $F_X^t$ the local flow along $X$, defined in a neighborhood of $p$. Then because of the pointwise equivariance of $\frak{C}$ we have
\[
(F_X^t)^\ast \Big(\p_ g \ast \varphi \Big)_{F_X^t(p)} = (F_X^t)^\ast \Big(\frak{C}(\varphi)_{F_X^t(p)}\Big) = \frak{C}\Big((F_X^t)^\ast(\varphi_{F_X^t(p))}\Big)
\]
and taking the derivative at $t=0$ yields
\begin{equation} \label{eq:Lie-equiv}
\Ll_X (\p_g \ast \varphi)_p = \Ll_X (\frak{C}(\varphi))_p = \frak{C}_* (\Ll_X \varphi)_p.
\end{equation}
Now $\Ll_X (\p_g \ast \varphi) = [X, \p_g \ast \varphi]^{FN}$, and since $\frak C$ is an immersion by Lemma \ref{lem:uniq}, it follows that $X \in H^0_{\ast \varphi}(M^7, TM^7) = \ker ad_{\p_g \ast \varphi}$ iff $\Ll_X\varphi = 0$. 

Since $\varphi$ uniquely determines the Riemannian metric $g_\varphi$ on $M^7$, any vector field satisfying $\Ll_X \varphi = 0$ must be a Killing vector field. 
Since $M^7$ is closed, the Ricci flatness of $G_2$-manifolds and Bochner's theorem imply that $X$ must be parallel, showing that in this case, $\dim H^0_\Phi(M^7, TM^7) = b^1(M^7)$.
\end{proof}

It was shown in \cite[Theorem 1.1]{KLS} that 
a $G_2$-structure $\varphi'$ is torsion-free if and only if 
$[\chi_{\varphi'}, \chi_{\varphi'}]^{FN}=0$, where 
$\chi_{\varphi'} := \frak{C}(\varphi') = \p_{g_{\varphi'}} \ast_{g_{\varphi'}} \varphi' \in \Om^3(M^7, TM^7)$.
Therefore, 
for a family of torsion-free $G_2$-structures $\{ \varphi_t \}$ with $\varphi_0 = \varphi$, 
we have 
\begin{align*} 
0 = 
\left. \dfrac d{dt} \right|_{t=0} [\chi_{\varphi_t}, \chi_{\varphi_t}] = 2 \left[ \chi_{\varphi_0}, \left. \dfrac d{dt} \right|_{t=0}\chi_{\varphi_t} \right]
&= 2 \left[ \chi_{\varphi_0}, \frak{C}_* (\dot \varphi_0)\right],
\end{align*}
so that $\dot \varphi_0  \in \Om^3(M^7)$ is a torsion free infinitesimal deformation of $\varphi_0$ 
iff $\frak{C}_* (\dot \varphi_0) \in \ker(ad_{\chi_{\varphi_0}}: \Om^3(M^7, TM^7) \to \Om^6(M^7, TM^7))$. Since ${\frak C}$ is an immersion and hence $\frak{C}_*$ injective by Lemma \ref{lem:uniq}, we have an isomorphism
\begin{align*}
\{ \text{torsion free\;}&\text{infinitesimal deformations of $\varphi_0$}\} \stackrel{\frak{C}_*} \cong\\
\nonumber
&\ker\Big(ad_{\chi_{\varphi_0}}: \Om^3(M^7, TM^7) \to \Om^6(M^7, TM^7)\Big) \cap \Im(\frak{C}_*).
\end{align*}

Observe that by (\ref{eq:Lie-equiv})
\begin{equation*}
\frak{C}_* (\Ll_X \varphi_0) = \Ll_X({\frak C}(\varphi_0)) = [X, \chi_{\varphi_0}]^{FN} = - ad_{\chi_{\varphi_0}}(X),
\end{equation*}
whence there is an induced inclusion
\begin{align*}
\dfrac{\{ \text{torsion free infinitesimal deformations of $\varphi_0$}\}}{\{\text{trivial deformations of $\varphi_0$}\}}\xhookrightarrow{\quad \mbox{$\frak{C}_*$}\quad} H^3_{\varphi_0}(M^7, TM^7).
\end{align*}
This implies the 5th part of Theorem \ref{thm:hG2}. 
\end{proof}

\section{Strongly homotopy  Lie algebra  associated  with
associative submanifolds}\label{sec:linfty}

In this  section  we  assign to each associative  submanifold in a $G_2$-manifold 
an $L_\infty$-algebra, using the  Fr\"olicher-Nijenhuis bracket  and   Voronov's derived bracket construction of $L_\infty$-algebras.
The main  purpose of this section  is to explain  the  motivation that led us to study 
the Fr\"olicher-Nijenhuis bracket on $G_2$- manifolds. 
We refer the reader to \cite{FLSV2018} for detailed and general treatment of the theory discussed  here.  

\subsection{Voronov's construction of  $L_\infty$-algebras}\label{subs:constr}

Strongly homotopy Lie algebras, also called  $L_\infty$-algebras, were defined by Lada and Stasheff in \cite{LS1993}, see  also \cite{Voronov2005} for a  historical account.
In \cite{Voronov2005}  Voronov suggested   a  relatively simple   method to construct  an $L_\infty$-algebra based  on algebraic data, now called V-data. 
A {\it $V$-data}   is  a  quadruple $(L,  P, \a, \triangle)$, where 
\begin{enumerate}
\item $L$ is a $\Z_2$-graded Lie algebra   $L =  L_0 \oplus L_1$ (we denote its bracket by $[.,.]$),
\item $\a$-  an abelian Lie subalgebra  of $L$,
\item $P$ : $L \to \a$  is a projection whose kernel is a Lie subalgebra of $L$,
\item  $\triangle  \in (\ker P) \cap L_1$ is an element such that $[\triangle, \triangle] = 0$.
\end{enumerate}

When $\triangle$ is an arbitrary element of $L_1$ instead of $\ker (P)\cap L_1$, we refer to $(L, \a, P,\triangle)$ as a
{\it curved V-data}.

Recall that a
$(k, l)$-shuffle is a permutation of indices $1, 2, \cdots , k+l$ such that $\sigma(1) < \cdots <\sigma(k)$ and $\sigma(k+1) < \cdots  <\sigma(k+l)$.

\begin{definition}[{\cite[Definition 1]{Voronov2005}}] \label{def:linfty}
A vector space $V = V_0 \oplus V_1$ endowed with a sequence of
odd $n$-linear operations  $\m_n$, $n = 0, 1, 2, 3, \cdots$,  is
a {\it strongly homotopy Lie algebra or $L_\infty$-algebra} if: (a) all operations are
symmetric in the $\Z_2$-graded sense:
$$\m_n (a_1, \cdots , a_i, a_{i+1}, \cdots , a_n) = (-1)^{\bar a_i\bar a_{i+1}}\m_n ( a_1, \cdots , a_{i+1}, a_i, \cdots, a_n), $$
and (b) the ``generalized Jacobi identities''
$$\sum_{k+l=n}\sum_{(k, l)-\rm{shuffles}} (-1)^{\alpha}\m_{l+1} (\m _k (a_{\sigma(1)}, \cdots , a_{\sigma(k)}), a_{\sigma(k+1)}, \cdots, a_{\sigma(k+l)}) = 0 $$
hold for all $n = 0, 1, 2, \cdots$. Here  $\bar a$ is the  degree  of $a \in V$  and $(-1)^{\alpha}$ is the sign prescribed by the sign
rule for a permutation of homogeneous elements $a_1, \cdots , a_n \in V$.
\end{definition}

Henceforth symmetric will mean $\Z_2$-graded symmetric.

A 0-ary bracket is just a distinguished element  $\Phi$ in $V$. 
We  call the $L_\infty$-algebras with $\Phi = 0$ {\it strict}. In this  case $\m_1 ^2 = 0$ and  we also write  $d$ instead of $\m_1$.
For strict $L_\infty$-algebras, the  first  three ``generalized Jacobi identities'' simplify to
$$d^2 a = 0,$$
$$d\m_2(a, b) + \m_2(da, b) + (-1)^{\bar a\bar b} \m_2(db, a) = 0,$$
$$d\m_3(a, b, c) + \m_2(\m_2(a, b), c) + (-1)^{\bar b \bar c}\m_2(\m_2(a, c), b)
 + (-1)^{\bar a(\bar b+\bar c)}\m_2(\m_2(b, c), a)$$
 $$ +\m_3(da, b, c) + (-1)^{\bar a\bar b} \m_3(db, a, c) + (-1)^{(\bar a+\bar b)\bar c} \m_3(dc, a, b) = 0.$$

\begin{proposition}\label{prop:voronov}(\cite[Theorem 1, Corollary 1]{Voronov2005}). Let $(L, \a, P, \triangle)$ be a curved V-data. Then $\a$ is a curved  $L_\infty$-algebra for the multibrackets 
$$ \m_n(a_1, \cdots , a_n) = P[\cdots [[\triangle, a_1], a_2],\cdots , a_n].$$
We obtain a  strict $L_\infty$-algebra exactly when $\triangle \in \ker (P)$.
\end{proposition}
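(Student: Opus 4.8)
The plan is to recognize the multibrackets as the \emph{higher derived brackets} of the odd operator $D := \ad_\triangle = [\triangle,-]$ and to verify Definition \ref{def:linfty} directly. First I would record the single structural input that makes everything run: since $[\triangle,\triangle]=0$, the graded Jacobi identity (\ref{GLA-Jac}) gives $D^2 = \tfrac12\,\ad_{[\triangle,\triangle]} = 0$, so $D$ is an odd differential of $L$. Writing $\m_n(a_1,\dots,a_n) = P\,B_n(a_1,\dots,a_n)$ with $B_0 = \triangle$ and $B_n(a_1,\dots,a_n) = [B_{n-1}(a_1,\dots,a_{n-1}), a_n]$, the zero-ary operation is $\m_0 = P\triangle =: \Phi$, which vanishes exactly when $\triangle \in \ker P$; this will give the final clause about strictness.

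For the symmetry condition (a) I would show that $B_n$ is already graded symmetric in $a_1,\dots,a_n$ as an element of $L$. It suffices to treat an adjacent transposition: applying the graded Jacobi identity to $[[Y, a_i], a_{i+1}]$ with $Y = B_{i-1}(a_1,\dots,a_{i-1})$ produces $(-1)^{|a_i||a_{i+1}|}[[Y,a_{i+1}],a_i] + [Y,[a_i,a_{i+1}]]$, and the correction term dies because $\a$ is abelian, so $[a_i,a_{i+1}]=0$. Since this identity holds in $L$, applying the remaining right brackets $\ad_{a_{i+2}},\dots,\ad_{a_n}$ and then $P$ preserves it, yielding the required graded symmetry of $\m_n$.

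The substance is the generalized Jacobi identity (b), which I would obtain by expanding $D^2=0$ on the nested brackets and controlling the projections. The model case is $n=1$: inserting $P = \mathrm{id} - (\mathrm{id}-P)$ in $\m_1^2(a)= P[\triangle, P[\triangle,a]]$ and using $D^2=0$ kills the leading term; decomposing $\triangle = \Phi + \triangle_0$ with $\triangle_0 = (\mathrm{id}-P)\triangle \in \ker P$, the $\triangle_0$-contribution lies in $[\ker P,\ker P]\subset\ker P$ and is annihilated by the outer $P$, while the $\Phi$-contribution reassembles, via Jacobi and $[\a,\a]=0$, into $-\m_2(\Phi,a)$; this is exactly the curved relation $\m_1^2(a)+\m_2(\m_0,a)=0$. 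The general identity is the same bookkeeping done systematically: inserting $\mathrm{id}=P+(\mathrm{id}-P)$ at each internal node and decomposing $\triangle=\Phi+\triangle_0$, one uses $D^2=0$ to cancel the fully unprojected terms, the subalgebra property of $\ker P$ to annihilate (under the outer $P$) every term that has become a bracket internal to $\ker P$, and the abelianness of $\a$ to collapse the terms that factor through $\a$; what survives is precisely the $(k,l)$-shuffle sum.

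The hard part is this last combinatorial matching: tracking the Koszul signs $(-1)^\alpha$ and checking that the projected survivors organize into exactly the shuffle sum of Definition \ref{def:linfty}. I would handle it most cleanly by repackaging the data coalgebraically: extend $D$ and $P$ to the cofree cocommutative coalgebra $S^c(\a[1])$, let $Q$ be the degree-one coderivation whose Taylor coefficients are the $\m_n$, and reduce the entire family of identities to the single equation $Q^2=0$, which then follows from $D^2=0$ together with the two conditions that $\a$ is abelian and $\ker P$ is a subalgebra. Finally, when $\triangle\in\ker P$ we have $\Phi=\m_0=0$, so $\m_1^2=0$ and the structure is strict; conversely $\Phi=P\triangle\neq0$ forces a nonzero zero-ary bracket, giving the stated equivalence.
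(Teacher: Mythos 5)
The paper offers no proof of this proposition --- it is quoted directly from Voronov --- so there is nothing in the text to compare against; judged on its own terms, your outline is essentially Voronov's original argument and is correct. The three pillars you identify are the right ones: $\ad_\triangle^2=\tfrac12\ad_{[\triangle,\triangle]}=0$; graded symmetry of the nested brackets $B_n$ via an adjacent transposition, Jacobi, and $[\a,\a]=0$; and the generalized Jacobi identities extracted from $[\dots[[\triangle,\triangle],a_1],\dots,a_n]=0$ by projecting. To make ``what survives is precisely the shuffle sum'' airtight, you should record the unprojected identity explicitly: with $R_a:=[\,\cdot\,,a]$, the twisted Leibniz rule $R_a[x,y]=(-1)^{|y||a|}[R_ax,y]+[x,R_ay]$ together with the graded commutativity of the $R_{a_i}$ (again from $[\a,\a]=0$) gives
\begin{equation*}
0=R_{a_n}\cdots R_{a_1}[\triangle,\triangle]=\sum_{k+l=n}\sum_{\sigma}\pm\,\bigl[B_k(a_{\sigma(1)},\dots,a_{\sigma(k)}),\,B_l(a_{\sigma(k+1)},\dots,a_{\sigma(k+l)})\bigr];
\end{equation*}
then for each summand, decomposing $B_j=PB_j+(1-P)B_j$, the $[\a,\a]$ piece vanishes, the $[\ker P,\ker P]$ piece is killed by the outer $P$, and the two cross terms are exactly $\pm\m_{l+1}(\m_k(\dots),\dots)\pm\m_{k+1}(\m_l(\dots),\dots)$, because $PB_k\in\a$ lets you slide $R_{PB_k}$ past the remaining $R_{b_i}$; the double count over complementary subsets then produces the shuffle identity, uniformly in the curved case with $B_0=\triangle$ and $\m_0=P\triangle$. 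The one weak spot is your closing coalgebraic remark: $D=\ad_\triangle$ acts on $L$, not on the cofree coalgebra on $\a$, so ``extending $D$ and $P$ to $S^c(\a[1])$'' is not meaningful as literally stated; either drop it or replace it by the direct bookkeeping above, which already closes the argument.
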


\begin{remark}\label{rem:curv}  Usually in the literature  a strict $L_\infty$-algebra  is called {\it an $L_\infty$-algebra}, which we    also adopt  in this paper.
\end{remark}

\subsection{$L_\infty$-algebra associated  with an associative submanifold $L \subset M^7$}\label{subs:linfty}
Let $L$ be a   closed submanifold in   a  Riemannian $M$.
There exists  an open  neighborhood $N_\eps L$ of  the zero section $L$ in  the normal bundle $NL$ such that  the  exponential mapping $Exp: N_\eps L \to  M$ is a 
local diffeomorphism.

Given  such  an open neighborhood  $N_\eps L$,  we consider the
pullback operator $Exp ^*: \Om^* (M, TM) \to \Om^*(N_\eps L, TNL)$
$$ Exp ^* ( K)_x(X_1, \cdots ,X_k) = (Exp_*)_x^{-1}(K_{Exp(x)}((Exp_*)_x (X_1), \cdots , (Exp_*)_x (X_k))$$
for  $x \in  N_\eps L$, $X_i  \in T_x  N_\eps L$ and  $K \in \Om^k (M, TM)$ 
given by  \cite[8.16, p. 74]{KMS1993}.
It is known that \cite[Theorem 8.16 (2), p. 74]{KMS1993}
\begin{equation}
[Exp ^* (K),  Exp ^* (L)] ^{FN} = Exp ^* ([K, L] ^{FN}).\label{eq:exp}
\end{equation}

Let $\pi$ denote  the  projection $NL \to L$. 
For each  section $X \in \Gamma (NL)$, 
define the vector field $\hat X$ on $N_\eps L \subset NL$ by 
the restriction of the vertical lift of $X$ to $N_\eps L$. That is, 
$$
\hat X (y) = \left. \frac{d}{d t} \left(y + t X (\pi (y)) \right) \right|_{t=0} 
\qquad 
\mbox{ for } y \in N_\eps L. 
$$

Let $\Om^*(N_\eps L, TNL)$ be the space  of all smooth   $TNL$-valued    forms on  $N_\eps L$.
We define a linear embedding
$I:  \Om^*(L, NL) \to \Om^*(N_\eps L, TNL)$  by 
$$ I( \phi \otimes  X ): =  \pi^*(\phi) \otimes  \hat X$$
and extend  it linearly on the whole $\Om^*(L, NL)$.

  Let $P$ denote the   composition  of   the     restriction
$r : \Om^*(N_\eps L, TNL) \to  \Om^*(L, TNL)$  and the projection 
$Pr^N: \Om^* (L, TNL) \to  \Om^* (L, NL)$
defined via the  decomposition $TNL_{|L} = NL \oplus TL$.  
Set 
$$\tilde P : =  I\circ P: \Om^*(N_\eps L, TNL) \to \Om ^* (N_\eps L, TNL).$$

\begin{lemma}\label{lem:ext}  The image of  the map $\tilde P$ is an abelian  subalgebra of the $\Z_2$-graded Lie algebra $(\Om^* (N_\eps L, TNL), [., .] ^{FN})$. The space $\ker  \tilde P$  is closed  under  the Fr\"olicher-Nijenhuis  bracket.
\end{lemma}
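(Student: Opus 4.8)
The plan is to compute the Fr\"olicher--Nijenhuis bracket in coordinates adapted to the bundle $\pi\colon NL\to L$ and to evaluate the explicit formula (\ref{eq:kms}) in a coordinate frame. I would choose local coordinates $(x^i)$ on $L$ and linear fibre coordinates $(v^a)$ on $NL$, so that $L=\{v=0\}$ and, along the zero section, the splitting $TNL|_L=TL\oplus NL$ is realised by the horizontal frame $\{\partial_{x^i}\}$ and the vertical frame $\{\partial_{v^a}\}$. In these coordinates the vertical lift of a section $X=X^a\partial_{v^a}$ is $\hat X=X^a(x)\,\partial_{v^a}$, a fibrewise constant vertical field, while a basic form $\pi^\ast\phi$ contains no factor $dv^a$ and has coefficients depending only on $x$. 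The whole computation rests on the elementary identities $[\hat X_1,\hat X_2]=0$ and $\imath_{\hat X}\pi^\ast\phi=\Ll_{\hat X}\pi^\ast\phi=0$.

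For the first assertion I would first note that $P\circ I=\mathrm{id}$ on $\Om^\ast(L,NL)$, so that $P$ is surjective and the image of $\tilde P=I\circ P$ coincides with $I(\Om^\ast(L,NL))$, which is spanned by the elements $\pi^\ast\phi\otimes\hat X$. Applying (\ref{eq:kms}) to two such generators, the first term drops out because $[\hat X_1,\hat X_2]=0$, the two Lie-derivative terms vanish because $\Ll_{\hat X}\pi^\ast\phi=0$, and the last two terms vanish because $\imath_{\hat X}\pi^\ast\phi=0$. Thus the bracket of any two generators, and hence of any two elements in the image of $\tilde P$, vanishes identically, which gives that the image is an abelian subalgebra.

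For the second assertion, since $I$ is injective one has $\ker\tilde P=\ker P$. Writing a vector-valued form as $K=\sum_\mu\omega^\mu\otimes\partial_\mu$ in the coordinate frame and grading each coefficient form $\omega^\mu$ by its number of $dv$-factors, the condition $K\in\ker P$ becomes the single requirement that the horizontal-form, vertical-value part $\omega^{a,(0)}$ (no $dv$, value $\partial_{v^a}$) vanish along $L$, that is $\omega^{a,(0)}(x,0)=0$. To show that $\ker P$ is closed I would expand $[K_1,K_2]^{FN}$ bilinearly over the coordinate frame and apply (\ref{eq:kms}) with the value fields equal to $\partial_\mu,\partial_\nu$: the first term disappears since $[\partial_\mu,\partial_\nu]=0$, and $\Ll_{\partial_\mu}$ reduces to coefficientwise differentiation. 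It then remains to read off the horizontal-form, vertical-value part at $v=0$ of the resulting four-term expression and to check that it vanishes.

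The main obstacle, and the only non-bookkeeping step, is to handle the terms involving a derivative in a fibre direction. Every term in which $\omega_2^{a,(0)}$ or $\omega_1^{a,(0)}$ is merely differentiated in the $x$-directions or contracted vanishes automatically, since a form identically zero on $\{v=0\}$ has vanishing horizontal derivatives and contractions there. The genuinely dangerous term is $\omega_1^{b}\wedge\partial_{v^b}\omega_2^{a}$, together with its mirror image, because $\partial_{v^b}\omega_2^{a,(0)}(x,0)$ need not vanish even though $\omega_2^{a,(0)}(x,0)=0$. This is precisely where the hypothesis on the other factor enters: that term is wedged with $\omega_1^{b,(0)}(x,0)$, which vanishes because $K_1\in\ker P$, while its mirror is wedged with a factor that vanishes because $K_2\in\ker P$. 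In this way every surviving contribution is paired with a vanishing horizontal-form, vertical-value factor of $K_1$ or of $K_2$, giving $P[K_1,K_2]^{FN}=0$ and hence the closedness of $\ker\tilde P$. I expect this fibre-derivative cancellation to be the crux of the argument, the remainder being a matter of organising the coordinate expansion.
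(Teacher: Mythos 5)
Your proof of the first assertion is essentially the paper's: identify $\Im \tilde P=\Im I$ with the span of the generators $\pi^\ast\phi\otimes\hat X$ and kill every term of (\ref{eq:kms}) using $[\hat X_1,\hat X_2]=0$, $\imath_{\hat X}\pi^\ast\phi=0$ and $d\pi^\ast\phi=\pi^\ast d\phi$, so nothing to add there. For the second assertion you take a genuinely different route. The paper asserts that $\ker P$ is generated by decomposable forms $\alpha\otimes X$ with $X|_L$ tangent to $L$ and then only invokes the fact that the bracket of two $L$-tangent vector fields is $L$-tangent; this description of $\ker P$ misses the elements whose values are normal but whose form part restricts to zero on $L$ (for instance $dv^1\otimes\partial_{v^1}$ or $v^1\,\partial_{v^2}$), for which the tangency argument is silent. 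Your coordinate characterization --- $K\in\ker P$ iff the $dv$-free part of each $\partial_{v^a}$-coefficient vanishes on $\{v=0\}$ --- captures all of $\ker P$, and your isolation of the fibre-derivative terms $\omega_1^{b}\wedge\partial_{v^b}\omega_2^{a}$ (together with the analogous fibre-contraction terms such as $d\omega_1^{b}\wedge\imath_{\partial_{v^b}}\omega_2^{a}$) as the only ones not killed outright, each rescued because it is wedged with the partner's vanishing $dv$-free normal component, is precisely the cancellation on which the lemma rests. So your argument is more laborious but also more complete: the paper's sketch buys brevity at the price of a generation claim that is not literally accurate, while your coordinate bookkeeping supplies the missing verification.
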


\begin{proof}    Note that the image of $\tilde P$ is   equal to the image of $I$. To   prove the first  assertion  of Lemma \ref{lem:ext}  it  suffices to   prove  that
\begin{equation}
[\pi^*(\alpha_1) \otimes \hat X_1, \pi^*(\alpha_2)\otimes \hat X_2]^{FN} = 0\label{eq:van1}
\end{equation}
for  any $X, Y \in \Gamma(NL)$  and   any $\alpha_1, \alpha_2 \in 
\Om^* (L)$.  

Using (\ref{eq:kms}) and taking into account  the following  identities
$$[\hat X_1 , \hat X_2]^{FN}   = 0, $$
$$i_{\hat X_1} \pi ^* (\alpha_2) = 0 = i_{\hat X_2} \pi^* (\alpha_1), $$
$$ d \pi ^* (\alpha_1) = \pi^* (d \alpha_1),$$
for any $\alpha_1, \alpha_2 \in \Om ^*(L)$  and any  $X_1, X_2 \in \X (L)$
we  obtain (\ref{eq:van1})  immediately.

Now let us prove the  second assertion  of Lemma \ref{lem:ext}.  Since $I$ is injective, we have  $\ker \tilde P = \ker P$. Moreover,
$\ker P$ is generated by  the      $TNL$-valued   differential forms
$\alpha \otimes X$  such that $X(x) \in T_xL$ for all $x\in L$.  
Assume that  $\alpha_1 \otimes X_1, \alpha_2 \otimes  X_2 \in \ker P$.
Using (\ref{eq:kms})  and the  fact that  if $X_1, X_2 \in \X (N_\eps L)$ and $(X_1) _{| L} \in \X(L),  (X_2) _{|L} \in \X(L)$  then
$$[X_1, X_2]_{|L}  \in  \X(L),$$
we obtain immediately 
$$[\alpha_1\otimes X_1, \alpha_2 \otimes X_2] \in \ker  P = \ker \tilde P.$$
This completes  the proof of Lemma \ref{lem:ext}. 
\end{proof}

\begin{theorem}\label{thm:linfty} Assume that  $L$ is an associative 
submanifold of a $G_2$-manifold $(M^7, \varphi)$. 
There is an $L_\infty$-algebra structure  on  the space
$\Om^*(L, NL)$ given by the following family of  graded multi-linear maps
$$\m _k: \Om^*(L, NL) ^{ \otimes  k} \to \Om^*(L, NL)$$
$$\m_k (\om_1, \cdots,  \om_k) = P [\cdots [ [ Exp ^*(\chi), I (\om_1)]^{FN}, I(\om_2)]^{FN}\cdots , I(\om_k)]^{FN},$$
where $\chi = \p_g * \varphi \in \Om^3(M,TM)$  
with $\p_g$ from (\ref{eq:def-partial}). 
\end{theorem}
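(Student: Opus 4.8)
The plan is to exhibit the family $(\m_k)$ as the strict $L_\infty$-brackets produced by Voronov's construction, Proposition~\ref{prop:voronov}, applied to a suitable V-data. I would take the $\Z_2$-graded Lie algebra to be $\bigl(\Om^*(N_\eps L, TNL), [\cdot,\cdot]^{FN}\bigr)$, graded by the parity of the form degree, with abelian subalgebra $\a := \Im \tilde P$, projection $\tilde P$, and distinguished element $\triangle := Exp^*(\chi)$, where $\chi = \p_g \ast\varphi \in \Om^3(M, TM)$. Two of the four V-data axioms are immediate from Lemma~\ref{lem:ext}: $\a = \Im\tilde P = \Im I$ is an abelian subalgebra and $\ker \tilde P = \ker P$ is closed under $[\cdot,\cdot]^{FN}$; moreover $\tilde P$ is idempotent onto $\a$ because $P \circ I = \mathrm{id}$. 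Since $\chi$ has odd degree $3$, the element $\triangle$ lies in the odd part $L_1$.

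Next I would check the Maurer--Cartan condition. As $M$ has holonomy contained in $G_2$, the $4$-form $\ast\varphi$ is parallel of even degree, so Proposition~\ref{prop:FN} applies and gives $[\chi,\chi]^{FN} = [\p_g\ast\varphi, \p_g\ast\varphi]^{FN} = 0$. Naturality of the Fr\"olicher--Nijenhuis bracket under the local diffeomorphism $Exp$, equation~\eqref{eq:exp}, then yields $[\triangle,\triangle]^{FN} = Exp^*\bigl([\chi,\chi]^{FN}\bigr) = 0$.

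The decisive point, where the associativity hypothesis is used, is to verify $\triangle \in \ker\tilde P$; by Remark~\ref{rem:curv} this is exactly the condition that turns the a priori curved $L_\infty$-algebra into a strict one (equivalently, it forces $\m_0 = P(\triangle) = 0$). Along the zero section $Exp$ is the inclusion $L \hookrightarrow M$ and $(Exp_*)_x$ is the identity under the splitting $T_x(N_\eps L)|_L = T_xL \oplus N_xL = T_xM$, so $r(Exp^*\chi)|_L = \chi|_L$ and hence $\tilde P(\triangle) = I\bigl(Pr^N(\chi|_L)\bigr)$. For $X_1,X_2,X_3 \in T_xL$ the definition of $\p_g$ gives $\chi_x(X_1,X_2,X_3) = \sum_i \ast\varphi(e_i, X_1,X_2,X_3)\, e_i$, whose component along a normal vector $\nu$ equals $\ast\varphi(\nu, X_1, X_2, X_3)$. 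I would then invoke the characterization of associative subspaces --- checked in the flat $G_2$-model and propagated by transitivity of the $G_2$-action on associative $3$-planes --- that $\ast\varphi(\nu, X_1, X_2, X_3) = 0$ whenever $T_xL$ is associative and $\nu \perp T_xL$. Thus $\chi|_L$ is everywhere tangent to $L$, $Pr^N(\chi|_L) = 0$, and $\triangle \in \ker\tilde P$. I expect this translation of the calibration condition $\varphi|_L = \vol_L$ into the vanishing of the normal part of $\chi$ to be the main obstacle; everything else is formal.

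With $\bigl(\Om^*(N_\eps L, TNL), \a, \tilde P, \triangle\bigr)$ now a genuine (uncurved) V-data, Proposition~\ref{prop:voronov} endows $\a$ with a strict $L_\infty$-structure whose $n$-ary bracket is $\tilde P[\cdots[[\triangle, a_1], a_2], \cdots, a_n]$. Transporting along the linear isomorphism $I : \Om^*(L, NL) \xrightarrow{\ \cong\ } \a$ and using $\tilde P = I\circ P$ to peel off the outermost $I$, these brackets become exactly the maps $\m_k(\om_1,\ldots,\om_k) = P[\cdots[[Exp^*(\chi), I(\om_1)]^{FN}, I(\om_2)]^{FN}, \cdots, I(\om_k)]^{FN}$ of the statement, which completes the argument.
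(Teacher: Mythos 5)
Your proposal is correct and follows essentially the same route as the paper: Voronov's construction (Proposition~\ref{prop:voronov}) applied to the V-data $\bigl(\Om^*(N_\eps L, TNL), \Im I, \tilde P, Exp^*(\chi)\bigr)$, with the Maurer--Cartan condition coming from Proposition~\ref{prop:FN} and the naturality~(\ref{eq:exp}), the algebraic axioms from Lemma~\ref{lem:ext}, and associativity of $L$ forcing $Exp^*(\chi)\in\ker\tilde P$. The only cosmetic difference is that you verify the vanishing of the normal component of $\chi|_L$ by a pointwise computation on associative $3$-planes, whereas the paper's Lemma~\ref{lem:ass} cites the Harvey--Lawson criterion $\chi|_L=0$ together with the orthogonality $\chi(x,y,z)\perp x,y,z$; these are the same fact.
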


\begin{proof}  By  Proposition \ref{prop:FN} and using (\ref{eq:exp}),  we have
\begin{equation}\label{eq:i4} 
[Exp ^*(\chi), Exp ^* (\chi)]^{FN} = 0.
\end{equation}

\begin{lemma}\label{lem:ass}
A submanifold  $L$  in  a $G_2$-manifold $(M^7, \varphi)$ is associative iff
$$ Exp ^*(\chi) \in \ker \tilde P.$$
\end{lemma}

\begin{proof} It is known that a   3-submanifold  $L$ in a $G_2$-manifold
is  associative iff  $\chi_{| L}  = 0$ \cite{HL1982}. Since  $\chi(x\wedge y\wedge z) $ is orthogonal to each $x, y, z$, it follows  that $L$ is associative, iff
$Pr^N (\chi_{| L}) = 0\in \Om ^*(L, NL)$, where 
$Pr^N$ 
is the orthogonal projection from $TM |_L$ to the normal bundle of $L$.
This implies  Lemma \ref{lem:ass}, taking into account the injectivity of
$I$  and  $Exp^*$.
\end{proof}

Lemmas \ref{lem:ext}, \ref{lem:ass} and the equation (\ref{eq:i4})  imply  that
$(\Om^*(N_\eps L, TNL), I(\Om^*(L, NL)),\\ \tilde P,
 Exp^*( \chi))$ is a $V$-data.   This  and Proposition \ref{prop:voronov}   yield Theorem  \ref{thm:linfty}  immediately. 
\end{proof}

\begin{lemma}\label{lem:mk}  Let  $V_1, \cdots  , V_k \in \Gamma (NL) = \Om^0(L, NL)$. Then
$$\m_k (V_1, \cdots, V_k) =  P (\Ll_{I(-V_1)} \cdots \Ll_{I(-V_k)} (Exp ^* (\chi))).$$
\end{lemma}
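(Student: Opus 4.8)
The plan is to unwind the definition of the multibracket $\m_k$ from Theorem \ref{thm:linfty} using the fact that, for the degree-zero inputs $V_1,\dots,V_k \in \Gamma(NL) = \Om^0(L,NL)$, each $I(V_i) = \hat V_i$ is an honest vector field on $N_\eps L$, so that the Fr\"olicher-Nijenhuis bracket against it collapses to an ordinary Nijenhuis-Lie derivative.

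First I would record the relevant sign identity. For any $K \in \Om^*(N_\eps L, TNL)$ and any vector field $Y$, graded antisymmetry (\ref{GLA-skew}) together with (\ref{eq:liefn}) gives
\[
[K, Y]^{FN} = -[Y, K]^{FN} = -\Ll_Y K = \Ll_{-Y} K,
\]
since $Y$ has degree $0$ and the Nijenhuis-Lie derivative is linear in the vector-field slot. Taking $Y = I(V_i) = \hat V_i$ and noting $I(-V_i) = -\hat V_i$ by linearity of $I$, this reads $[K, I(V_i)]^{FN} = \Ll_{I(-V_i)} K$.

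Next I would apply this identity repeatedly, from the innermost bracket outward, to the defining formula
$$\m_k(V_1,\dots,V_k) = P[\cdots[[Exp^*(\chi), I(V_1)]^{FN}, I(V_2)]^{FN}\cdots, I(V_k)]^{FN}.$$
Each step peels off one bracket and replaces it by a Lie derivative, so after $k$ steps one obtains $\m_k(V_1,\dots,V_k) = P(\Ll_{I(-V_k)}\cdots \Ll_{I(-V_1)}(Exp^*(\chi)))$, that is, the asserted formula but with the Lie derivatives in the opposite order.

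The main point, and the only step that is not pure bookkeeping, is to justify reversing this order. Here I would invoke Lemma \ref{lem:ext}: the vertical lifts satisfy $[\hat V_i, \hat V_j]^{FN} = 0$, hence the vector fields $I(-V_i)$ pairwise commute, and so do the operators $\Ll_{I(-V_i)}$, since $\Ll_{\hat V_i}\Ll_{\hat V_j} - \Ll_{\hat V_j}\Ll_{\hat V_i} = \Ll_{[\hat V_i,\hat V_j]^{FN}} = 0$. Therefore the iterated Lie derivative is independent of the ordering, and in particular equals $P(\Ll_{I(-V_1)}\cdots\Ll_{I(-V_k)}(Exp^*(\chi)))$, which is the claimed expression. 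Consistently, this also re-proves that $\m_k$ is $\Z_2$-graded symmetric on degree-zero inputs, as it must be by Definition \ref{def:linfty}(a). The only thing to watch is that the sign identity requires the $V_i$ to be genuine sections, so that $I(V_i)$ is a vector field and (\ref{eq:liefn}) applies; for positive-degree inputs the bracket would not reduce to a Lie derivative, and the statement would fail.
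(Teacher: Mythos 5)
Your proposal is correct and follows essentially the same route as the paper's own (much terser) proof: the sign identity $[Exp^*(\chi), I(V_i)]^{FN} = [I(-V_i), Exp^*(\chi)]^{FN}$ combined with (\ref{eq:liefn}), plus the mutual commutativity of the vertical lifts $I(V_i)$ to justify the reordering of the Lie derivatives. You simply make explicit the order-reversal step that the paper leaves as immediate.
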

\begin{proof}
Let $V_i \in \Gamma (NL)$. Then $\{ I(V_i)\}$    are mutually commuting vector fields on $N_\eps L$.  Using (\ref{eq:liefn})  and
noting that 
$$[Exp ^*(\chi), I(V_i)]  ^{FN} =  [I(-V_i), Exp ^*(\chi)]^{FN}$$
 we obtain Lemma \ref{lem:mk} immediately.
\end{proof}

We shall denote  the  map $\m_1:\Om^*(L, NL) \to \Om^{*+3}(L, NL) $ by $d_L$.   
Since $\dim L = 3$, $d_L$ is non-trivial only  on $\Om ^0 (L, NL)$.

\begin{remark}\label{rem:deform} Using  the formal  deformation theories developed in \cite{LO2016} and \cite{LS2014} it is not hard  to see  that
the  $L_\infty$-algebra  associated to a closed associative  submanifold $L$  encodes  the formal and smooth   associative deformations of  $L$. This and further generalization have been 
considered  in \cite{FLSV2018}. 
A search  for an $L_\infty$-algebra  associated   to each  
associative  submanifold in $G_2$-manifolds 
led us to discover the role  of the Fr\"olicher-Nijenhuis bracket 
in  geometry of $G_2$-manifolds. 
\end{remark}


\end{document}